\title{Non-associative Frobenius algebras of type $^1E_6$ with trivial Tits algebras}
\author{Jari Desmet}
\address{\parbox{\linewidth}{Ghent University \\ Department of Mathematics: Algebra and Geometry \\ Krijgslaan 281 -- S25 \\ 9000 Gent \\ Belgium}}
\email{\href{mailto:jari.desmet@ugent.be}{jari.desmet@ugent.be}}
\date{\today}
\keywords{non-associative algebras, exceptional groups, Lie algebras, Frobenius algebras, E6}
\subjclass[2020]{20F29, 20G41, 17B10, 17D99, 17A36}
\theoremstyle{definition}
\newtheorem{definition}{Definition}[section]
\newtheorem{notation}[definition]{Notation}
\theoremstyle{plain}
\newtheorem*{theorem*}{Theorem}
\newtheorem{theorem}[definition]{Theorem}
\newtheorem{lemma}[definition]{Lemma}
\newtheorem{proposition}[definition]{Proposition}
\newtheorem{corollary}[definition]{Corollary}
\theoremstyle{remark}
\newtheorem{remark}[definition]{Remark}
\DeclareMathOperator{\spank}{span}
\DeclareMathOperator{\Lie}{Lie}
\DeclareMathOperator{\Der}{Der}
\DeclareMathOperator{\Stab}{Stab}
\DeclareMathOperator{\im}{Im}
\DeclareMathOperator{\gl}{\mathfrak{\gl}}
\DeclareMathOperator{\Oct}{\mathbb{O}}
\DeclareMathOperator{\Tr}{Tr}
\newcommand{\g}{\mathfrak{g}}
\DeclareMathOperator{\kar}{char}
\DeclareMathOperator{\ad}{ad}
\DeclareMathOperator{\Sq}{S}
\DeclareMathOperator{\End}{End}
\DeclareMathOperator{\Aut}{Aut}
\DeclareMathOperator{\GL}{GL}
\DeclareMathOperator{\Out}{Out}
\DeclareMathOperator{\Inn}{Inn}
\DeclareMathOperator{\Hom}{Hom}
\DeclareMathOperator{\orth}{orth}
\newcommand{\upi}[2]{#1^{(#2)}}
\newcommand{\id}{\mathrm{id}}
\numberwithin{equation}{section}
\begin{document}
\maketitle

\begin{abstract}
    Very recently, Maurice Chayet and Skip Garibaldi have introduced a class of commutative non-associative algebras, for each simple linear algebraic group over an arbitrary field (with some minor restriction on the characteristic).
    In a previous paper, we gave an explicit description of these algebras for groups of type $G_2$ and $F_4$ in terms of the octonion algebras and the Albert algebras, respectively. In this paper, we attempt a similar approach for type $E_6$.

\end{abstract}
\section{Introduction}

In \cite{chayet2020class}, Maurice Chayet and Skip Garibaldi define a construction with input any absolutely simple algebraic group $G$ over a field of large enough characteristic, and output a non-associative algebra $A(G)$ with a homomorphism $G\rightarrow \Aut(A(G))$. This construction applies to all absolutely simple algebraic groups, regardless of type and twisted form, and produces different algebras for different algebraic groups up to isogeny.

This construction was originally used to produce a new $3875$-dimensional algebra, the automorphism group of which is precisely a group of type $E_8$. Moreover, this algebra is the unique one on the second smallest representation for $E_8$ (the smallest representation is the adjoint one). Another construction for this algebra in type $E_8$ was found almost simultaneously by Tom De Medts and Michiel Van Couwenberghe in \cite{DMVC21}, where they prove complementary results about the structure of this algebra in the context of \emph{axial algebras}.

This class of algebras is explicitly constructed from the symmetric square of the Lie algebra. However, it would be interesting to find constructions of these algebras that use other algebraic structures instead of the adjoint representation. This is hinted at in \cite[Proposition~10.5]{chayet2020class}, but other than type $A_2$ in \cite[Example~10.9]{chayet2020class}, Chayet and Garibaldi do not investigate this further.

In \cite{jariprevious}, we provide an answer to this question for the algebras of type $G_2$ and $F_4$, where the algebras were constructed using the octonions and the Albert algebra respectively. In this paper, we use the Albert algebra to construct $A(E_6)$ for certain forms of $E_6$ (specifically, those of inner type with trivial Tits algebras, i.e.\@ the groups of type $E_6$ we can associate an Albert algebra with). Moreover, we provide some additional observations on the automorphism group of such algebras.

The paper follows a similar method of solving the problem as \cite{jariprevious}. However, a few additional complexities arise: as a simply connected group of type $E_6$ has center $C_3$ which acts non-trivially on the Albert algebra, we will need to cancel out that action on the algebra $A(G)$. This is solved by looking at the \emph{symmetric cube} $\Sq^3 W$ of the Albert algebra. It is then a question of finding a map from $\Sq^3W$ to $\sigma(A(G))$. Afterwards, we need to define multiplication maps on our model for $\sigma(A(G)$, which we will do by defining multiplication maps on $\Sq^3W$ that descend nicely onto $\sigma(A(G))$.

One of these multiplications is easily defined (see \cref{prop:mult1}), but another linearly independent one is harder to find. It turns out that we can cleverly use outer automorphisms of $E_6$ to produce a new multiplication (see \cref{prop:mult2}). In fact, this last method allows us to describe all non-commutative multiplication maps as well.

In \cref{preliminaries} we recall the results of \cite{chayet2020class}, and provide the necessary background to relate groups of type $E_6$ with Albert algebras. Afterwards, in \cref{sec:outaut}, we make some observations about outer automorphisms and how they can be modelled on representation of simple algebraic groups. Moreover, we prove in the case of $E_6$ that the automorphism group of an $E_6$-equivariant polynomial on $V(\omega_1+\omega_6)$ cannot be much larger than $E_6$ itself. 

\cref{sece6} is the largest section of the article: we provide a model for the representation $V(\omega_1+\omega_6)$ using the Albert algebra, and describe all (commutative) $E_6$-equivariant multiplications on this model explicitly. Afterwards, in \cref{identificationag2}, we use these results to construct a model for $A(G)$ using the Albert algebra.

Lastly, in \cref{sec:autgroup}, we observe that any commutative algebra defined on the representation $V(\omega_1+\omega_6)$ has automorphism group $G\rtimes C_2$ where $G$ is the adjoint group of type $E_6$, and that any non-commutative algebra on $V(\omega_1+\omega_6)$ has automorphism group $G$, using results from \cref{sec:outaut,sece6}.
\subsection*{Acknowledgements}
	The author is supported by the FWO PhD mandate 1172422N. The author is grateful to his supervisor Tom De Medts, for guiding him through the process of writing this article.
\subsection*{Assumptions}
We use the same restrictions on the characteristic as \cite{chayet2020class}, i.e.\@ $\kar k \geq h+2 = 14$ or $0$ with $h$ the Coxeter number of the Dynkin diagram $E_6$. In this case, the Weyl module of highest weight $\omega_1+\omega_6$ for $E_6$ is irreducible, as is the Weyl module of weight $2\tilde{\alpha}$, where $\tilde{\alpha}$ is the highest root, by \cite{lub01}. In particular, we can do character computations for these representations independent of the field $k$.

\section{Preliminaries}\label{preliminaries}
\subsection{Representation theory of absolutely simple groups and Lie algebras}
The irreducible representations of an absolutely simple split group over a field $k$ are classified by the dominant weights of the associated root system (\cite[Theorem~22.2]{Milne}). To any dominant weight $\lambda$ we can also associate a so-called \emph{Weyl module} $V(\lambda)$. Under the characteristic assumption above, we know the Weyl modules considered below for type $E_6$ are irreducible \cite{lub01}. We will often identify a representation by its associated dominant weight using labelling as in \cite[Plates I-IX, p.264-290]{Bourbaki46}. As mentioned in \cite[\S 7, p.10-11]{chayet2020class}, for non-split reductive groups there is a unique representation that becomes isomorphic to $V(\lambda)$ when base changing to $\bar{k}$ if $\lambda$ is contained in the root lattice and fixed by the Galois action as in \cite[Théorème 3.3]{Tits1971}. In the case of type $E_6$, this means we need the coefficients of $\omega_1$ and $\omega_6$ to be equal, as well as the coefficients of $\omega_3$ and $\omega_5$. We will denote that representation by the same notation. For further discussion on irreducible representations of simple algebraic groups, see \cite{jantzenrepalggroups}.

	We will work with the representations of algebraic groups as representations of their Lie algebras. We will use the same notation $V(\lambda)$ for the Weyl module when considered as a Lie algebra representation. 
	

	We will use characters of representations of Lie algebras to compute dimensions of certain representations (\cite[Corollary~24.6]{FultonHarris}) and morphism spaces (the argument for finite groups is given in \cite[p.12]{FultonHarris}, but also holds for Lie algebras), as well as decompositions of symmetric powers (\cite[Exercise~23.39]{FultonHarris}).

\begin{remark}\label{reptheoryremark}
    Characters of representations of Lie algebras (in particular the Weyl modules, over algebraically closed fields) can be computed using Sage \cite{sagemath}.
\end{remark}
\subsection{Twisted forms of type $E_6$.}
An absolutely simple algebraic group is of type $E_6$ if it becomes isomorphic to an absolutely simple algebraic group of type $E_6$ when the base field is extended to an algebraic closure. This means that over arbitrary fields, we have a wider variety of groups of type $E_6$. As will become apparent in the rest of this paper, the representation theory of non-split simple groups depends on the way the group is twisted. This phenomenon was studied by Jaqcues Tits in \cite{Tits1971}. The concepts of \emph{Tits indices} (see \cite[Section~2.3]{tits1966classification}) and \emph{Tits algebras} (see \cite[Section 27]{involutions}) are essential in this work.

For our purposes, it is relevant to make the distinction between groups of \emph{inner type} $^1E_6$ and \emph{outer type} $^2E_6$. The absolutely simple group of type $E_6$ will be called of inner type when the Galois action (the $*$-action as in \cite[Section~2.3]{tits1966classification}) on the Dynkin diagram is trivial, and of outer type when it has order $2$. It is well known that we can only associate an Albert algebra to a group of type $E_6$ if it is of type $^1E_6$ and has trivial Tits algebras (see \cite[Theorem 1.4]{Garibaldie67}). As we use the Albert algebra to construct our new model for $A(G)$, we will have to restrict ourselves to this condition as well.

It remains to be seen if there is a way of extending this model to groups of type $^2E_6$, in a similar fashion as was done for Brown algebras and Freudenthal triple systems in \cite{Tomhermitiancubic}.

\subsection{Construction of the algebras}

We summarize the construction of the Chayet-Garibaldi algebras in this short section. For more background, see \cite{chayet2020class,jariprevious}.

In their paper, Chayet and Garibaldi constructed an algebra for any absolutely simple algebraic group over a field of large enough characteristic, and showed the following. The characteristic condition ensures a counit can be defined, since $\dim G$ will be invertible. Recall that $h$ denotes the Coxeter number of the root system asociated to the simple linear algebraic group $G$.

\begin{theorem}[\cite{chayet2020class}]\label{modulestructure}
Let $G$ be an absolutely simple algebraic group over a field $k$ with $\kar k\geq h+2$ or $0$, and let $\g = \Lie(G)$. Define
\begin{align*}
	S\colon \Sq^2\g &\to \End(\g) \\
	XY&\mapsto h^{\vee} \ad X \bullet \ad Y + \tfrac{1}{2}(X K(Y,\mathunderscore) + Y K(X, \mathunderscore) )\text{,}
\end{align*}
and
\begin{align*}
	\diamond \colon \im(S)\times \im(S) &\to \im(S) \\
	(S(AB), S(CD)) \mapsto \tfrac{h^{\vee}}{2} ( &
		S(A, (\ad C \bullet \ad D)B) + S((\ad C \bullet \ad D)A,B) \\
		&+ S(C, (\ad A \bullet \ad B)D) + S( (\ad A \bullet \ad B)C, D)\\
		&+ S([A,C][B,D])+S([A,D][B,C]) )\\
		+ \tfrac{1}{4}(& K(A,C)S(BD) + K(A,D)S(BC)\\
		&+ K(B,C)S(AD) + K(B,D)S(AC) )\text{.}
\end{align*}
Then $A(G) \coloneqq (\im(S),\diamond)$ is a well-defined simple unital commutative non-associative algebra with counit $\varepsilon(a) \coloneqq \tfrac{1}{\dim(G)}\Tr(a)$.

If the type of $G$ appears in \cref{tablemodulestructure}, $A(G)$ decomposes as a representation of $G$ into $A(G) = k\oplus V(\lambda)$,  where $\lambda$ is as in \cref{tablemodulestructure}. The Weyl module $V(\lambda)$ occurring in this decomposition is irreducible.	
\end{theorem}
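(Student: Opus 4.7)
The plan is to leverage the $G$-equivariance of both $S$ and $\diamond$, together with the representation-theoretic decomposition of $\Sq^2\g$, to reduce everything to checks on finitely many irreducible summands. First I would verify that $S\colon \Sq^2\g\to\End(\g)$ is manifestly $G$-equivariant (each of its two defining summands is) and compute $\ker S$ and $\im S$ by decomposing $\Sq^2\g$ into irreducibles via the character methods recalled in the preliminaries. For each type in \cref{tablemodulestructure}, one expects a trivial summand (spanned by the Casimir element $C = \sum_i X_i X^i$ with $\{X_i\},\{X^i\}$ dual bases for $K$), a copy of $V(\lambda)$, and additional summands on which $S$ must vanish. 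A single evaluation of $S(C)$ using the invariance of $K$ shows $S(C)$ is a nonzero scalar multiple of $\id_\g$, after which Schur's lemma together with the irreducibility assumption identifies $\im S$ with $k\oplus V(\lambda)$.

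Next I would show that $\diamond$ is well-defined. The defining right-hand side is a $G$-equivariant symmetric bilinear map $\Sq^2\g\times\Sq^2\g\to\End(\g)$, so it suffices to check that (i) its image lies in $\im S$ and (ii) it annihilates $\ker S$ in either slot. Since $\ker S$ decomposes into a finite list of irreducible $G$-submodules, both reduce to checking a finite number of $\Hom_G$-spaces are zero, which follows from character computations; any offending summand of $\Sq^2\g \otimes \Sq^2\g$ would contradict the appearance of only the trivial component and $V(\lambda)$ in $\im S$. Commutativity is then immediate from the symmetry of the formula in $(AB)$ and $(CD)$. For unity, one uses $\diamond$ with one argument equal to $S(C)$: equivariance plus Schur force $S(C)\diamond a = \mu a$ for a single scalar $\mu$, which a direct calculation shows is nonzero, so a rescaling of $S(C)$ is the unit. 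The counit $\varepsilon(a) = \tfrac{1}{\dim G}\Tr(a)$ makes sense because $\dim G$ is invertible by the characteristic assumption, and $\varepsilon(1)=1$ follows from the chosen normalization.

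Simplicity and non-associativity come last. Since $A(G)=k\oplus V(\lambda)$ as a $G$-module and $V(\lambda)$ is irreducible, the only $G$-stable subspaces are $0$, $k$, $V(\lambda)$, and $A(G)$; neither $k$ nor $V(\lambda)$ can be an ideal, the former because $1\diamond V(\lambda) = V(\lambda)$ and the latter because $V(\lambda)\diamond V(\lambda)$ must have nontrivial trivial component (its projection there recovers an equivariant symmetric bilinear form, visible already via $\varepsilon$). Non-associativity can be extracted by exhibiting a single explicit triple on which the associator is nonzero, or more structurally by noting that any associative commutative algebra on $k \oplus V(\lambda)$ with the given $G$-action would force $V(\lambda)$ to be an algebra direct summand or a square-zero ideal, both of which are ruled out.

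The step I expect to be the main obstacle is the well-definedness of $\diamond$: the formula is not manifestly factoring through $\im S\times\im S$, and ruling out contributions from $\ker S$ requires a careful decomposition of $\Sq^2\g$ together with enough explicit computation to pin down the scalar by which $\diamond$ acts on each summand. Everything else — unity, commutativity, counit, simplicity — then follows mechanically by Schur's lemma once the representation-theoretic bookkeeping is complete.
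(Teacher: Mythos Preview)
The paper does not prove this theorem at all: it is stated with attribution \texttt{[\textbackslash cite\{chayet2020class\}]} and no proof environment follows. The result is imported wholesale from Chayet and Garibaldi, so there is no ``paper's own proof'' to compare against.

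That said, your sketch has two genuine soft spots worth flagging. First, the well-definedness argument for $\diamond$ is not a pure character computation. You claim that checking the formula annihilates $\ker S$ ``reduces to checking a finite number of $\Hom_G$-spaces are zero,'' but in general $\Hom_G(\ker S\otimes \Sq^2\g,\im S)$ is \emph{not} zero, so representation theory alone does not force the specific bilinear expression to vanish on $\ker S$; one actually has to compute. In the original source this is handled by explicit identities in $\End(\g)$ (using the Jacobi identity and the invariance of $K$), not by abstract multiplicity arguments. Second, your simplicity argument assumes any ideal of $A(G)$ is $G$-stable, which is not automatic: an ideal is closed under $\diamond$, not a priori under the $G$-action. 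One needs an extra step (e.g.\ connectedness of $G$ plus the fact that $g\cdot I$ is again an ideal, combined with a Zariski-density or irreducibility argument) before restricting to $G$-submodules. The remaining pieces of your outline---equivariance of $S$, the Casimir giving the unit, the counit via trace, and the decomposition $k\oplus V(\lambda)$---are in line with how such results are typically established.
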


%
\begin{table}[h]
	\centering
	\begin{tabular}{c|c c c c c c}
		type of $G$& $A_2$ & $G_2$ & $F_4$ & $E_6$ & $E_7$&$E_8$ \\
		\hline
		dual Coxeter number $h^{\vee}$ & $3$&$4$&$9$&$12$&$18$&$30$ \\
		Coxeter number $h$ & $3$&$6$&$12$&$12$&$18$&$30$ \\
		Dominant weight $\lambda$ & $\omega_1 + \omega_2$&$2\omega_1$&$2\omega_4$&$\omega_1+\omega_6$&$\omega_6$&$\omega_1$ \\
		Dimension of $V(\lambda)$ & $8$&$27$&$324$&$650$&$1539$&$3875$ \\
	\end{tabular}
	\medskip
	
	\caption{The table from \cite{chayet2020class}. The fundamental dominant weights are labelled using Bourbaki labelling.}\label{tablemodulestructure}
\end{table}

In the last section of \cite{chayet2020class}, the authors describe an embedding of these algebras into the endomorphism ring of a natural representation for the groups of type $A_2,G_2,F_4,E_6$ and $E_7$. It is this embedding that we will use to obtain new descriptions for the algebras in question.
\begin{proposition}[{\cite[Proposition~10.5]{chayet2020class}}]\label{embedding}
	If $G$ has type $A_2,G_2,F_4,E_6$ or $E_7$ and $\pi\colon G \to \GL(W)$ is the natural irreducible representation of dimension $3,7,26,27$ or $56$ respectively, then the formula
	\begin{equation}\label{sigma}
		\sigma(S(XY)) =6h^{\vee} \pi(X)\bullet \pi(Y) - \tfrac{1}{2}K(X,Y)
	\end{equation}
	defines an injective $G$-equivariant linear map
	\[\sigma \colon A(\g) \hookrightarrow \End(W)\text{.} \]
	Moreover, $\sigma$ maps the identity $\id_\g$ to the identity $\id_W$.	
\end{proposition}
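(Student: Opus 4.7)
The plan is to verify in sequence that (i) the formula gives a well-defined map on $A(\g)$, (ii) the resulting map $\sigma$ is $G$-equivariant, (iii) $\sigma(\id_\g) = \id_W$, and (iv) $\sigma$ is injective.

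For (i), introduce an auxiliary map $\tilde{\sigma}\colon \Sq^2\g \to \End(W)$ defined by $\tilde{\sigma}(XY) = 6h^{\vee}\pi(X)\bullet\pi(Y) - \tfrac{1}{2}K(X,Y)\id_W$. Both $S$ and $\tilde{\sigma}$ are manifestly $G$-equivariant (for $S$ this is built into the Chayet--Garibaldi construction; for $\tilde{\sigma}$ it follows from $G$-equivariance of $\pi$ and $K$), so their kernels are $G$-submodules of $\Sq^2\g$. Well-definedness of $\sigma$ on $A(\g) = \im(S)$ amounts to $\ker(S) \subseteq \ker(\tilde{\sigma})$, and by Schur's lemma this reduces to comparing the irreducible decompositions of $\Sq^2\g$ and $\End(W) = W\otimes W^*$. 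In each of the five types one verifies by a character computation (via the Weyl character formula or Sage, as in \cref{reptheoryremark}) that every irreducible constituent of $\ker(S)$ is absent from $\End(W)$. For example, in type $E_6$ one has $\Sq^2\g \cong V(0)\oplus V(\omega_1+\omega_6)\oplus V(2\omega_2)$ and $\End(W)\cong V(0)\oplus V(\omega_2)\oplus V(\omega_1+\omega_6)$, so $\ker(S) = V(2\omega_2)$ has no target summand in $\End(W)$ at all; the four remaining types are handled analogously.

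Statement (ii) is then inherited directly from $\tilde{\sigma}$. For (iii), pick a basis $\{e_i\}$ of $\g$ with $K$-dual basis $\{e^i\}$. A short computation from the definition of $S$ gives $\sum_i S(e_ie^i) = (h^{\vee}c_\g + 1)\,\id_\g$, where $c_\g$ is the Casimir eigenvalue on the adjoint representation; hence $\id_\g$ is a non-zero scalar multiple of this sum inside $A(\g)$. Applying $\tilde{\sigma}$ termwise, the identities $\sum_i \pi(e_i)\bullet\pi(e^i) = c_W\,\id_W$ (with $c_W$ the Casimir eigenvalue on the irreducible $W$) and $\sum_i K(e_i,e^i) = \dim(\g)$ force $\sigma(\id_\g)$ to be a scalar multiple of $\id_W$. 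That this scalar equals $1$ reduces to a finite numerical identity in $h^{\vee}$, $\dim(\g)$, $c_\g$ and $c_W$, verifiable in each type from the standard formula $c_V = \langle \mu, \mu+2\rho\rangle$ applied to the highest weight of $V$.

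Injectivity (iv) is then a direct consequence of the decomposition $A(\g) = k \oplus V(\lambda)$ from \cref{modulestructure} into two non-isomorphic irreducibles. The trivial summand injects by (iii). By Schur's lemma, $\sigma|_{V(\lambda)}$ is either zero or an isomorphism onto a copy of $V(\lambda)$ in $\End(W)$; the former is ruled out by exhibiting a single element of $\Sq^2\g$ whose $\tilde\sigma$-image is not a scalar multiple of $\id_W$, for example $X\cdot X$ for a root vector $X\in\g$, where $K(X,X) = 0$ and $\pi(X)^2$ is a non-scalar endomorphism of the natural representation. The conceptually delicate step is the decomposition computation in (i), which is the only place where the specific list of types enters essentially; the remaining work is essentially a handful of per-type numerical checks.
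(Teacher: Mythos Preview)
The paper itself does not prove this proposition; it is quoted verbatim from \cite[Proposition~10.5]{chayet2020class} with no argument supplied. Your outline is essentially the same strategy as in the original source: compare the irreducible decompositions of $\Sq^2\g$ and $\End(W)$ to get well-definedness, and then handle the identity and injectivity separately.

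There is, however, a genuine gap in your step (iv). The witness you propose---$XX$ for a root vector $X$---fails outright in type $A_2$. In the standard $3$-dimensional representation every root vector is an elementary matrix $E_{ij}$ with $E_{ij}^2=0$, so $\tilde\sigma(XX)=0$. Worse, in that case $XX$ has weight $2\alpha$, which is not a weight of $V(0)\oplus V(\omega_1+\omega_2)$, hence $XX\in V(2\tilde\alpha)=\ker(S)$ and $S(XX)=0$; the example is vacuous rather than merely unhelpful. The claim ``$\pi(X)^2$ is a non-scalar endomorphism'' is thus false in at least one of the five types and is not justified in the others.

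The fix is minor: instead of $XX$, take $XY$ for root vectors $X=X_\alpha$, $Y=X_\beta$ with $\alpha+\beta$ a root (so $\pi(X)\bullet\pi(Y)\neq 0$ is visible already in type $A_2$ as $\tfrac12 E_{13}$) and $K(X,Y)=0$. Then $\tilde\sigma(XY)$ is non-scalar, and since $K(X,Y)=0$ the $V(0)$-component of $XY$ vanishes, so $\sigma$ is nonzero on $V(\lambda)$. Alternatively, one can simply observe from the decomposition computed in (i) that $\ker\tilde\sigma$ is a submodule of $\Sq^2\g$ not containing $V(\lambda)$ unless $\tilde\sigma$ factors through $V(0)$, which contradicts the existence of any non-scalar value of $\tilde\sigma$.
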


The cases $A_2,G_2,F_4$ have already been explored previously, see \cite[Example 10.6]{chayet2020class} and \cite{jariprevious}. To deal with the case of $E_6$, we will model its $27$-dimensional irreducible representation as the exceptional \emph{Albert algebra}, the hermitian matrices over the octonion algebra. The following two subsections will introduce octonion algebras and Albert algebras.

\subsection{The octonion algebra}\label{octonionsubsection}

Though octonion algebras can be defined over any characteristic, we restrict ourselves to $\kar k \neq 2$. We will outline some results about these objects that will be of use later, mainly for computations.

The treatment given in this section is based on \cite{SpringerVeldkamp}.
\begin{definition}[Octonion algebra]\leavevmode
	\begin{enumerate}
		\item An \emph{octonion algebra} is an $8$-dimensional $k$-algebra $A$ equipped with a nondegenerate quadratic form $N\colon A \to k$ such that
	\[N(a b) = N(a)N(b),\]
	for all $a,b\in A$. We call $N$ the \emph{norm} of the composition algebra. We denote its associated bilinear form by $\langle \cdot, \cdot \rangle$, so $\langle x,y \rangle = N(x+y)-N(x) - N(y)$ for any two octonions $x,y \in A$. We will say two octonions $a,b \in A$ are orthogonal and write $a\perp b$ whenever $\langle a,b \rangle =0$. If $N$ is isotropic, we will call the octonion algebra \emph{split}.
		\item Let $A$ be an octonion algebra with identity $e$.
	We define the \emph{standard involution} $\overline{\cdot}\colon A \to A$ by
	\[ \overline{x} = \langle x,e \rangle e -x, \] for all $x\in A$. The standard involution is an anti-automorphism of the octonion algebra.
	\end{enumerate}
\end{definition}

%
%

It will be convenient to make use of a standard basis to deal with some computations. 
\begin{proposition}\label{orthogonalanisotropic}
	Any octonion algebra $A$ over a field $k$ with $\kar k \neq 2$ has an orthogonal basis of the form $e_0=e,e_1 = a,e_2 = b,e_3 = ab, e_4 = c,e_5=ac,e_6 = bc,e_7 = (ab)c$, with $N(a)N(b)N(c)\neq0$.
\end{proposition}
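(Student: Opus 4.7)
The plan is to build the basis by iterated Cayley--Dickson doubling, starting from the identity and doubling twice with anisotropic orthogonal elements $a, b, c$. This is the classical construction; the task is really to justify that at each step a suitable element exists and that the resulting subalgebra is again a composition algebra.

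First I would set $e_0 = e$ and consider the subalgebra $k\cdot e$. Because $\kar k \neq 2$, the bilinear form $\langle \cdot,\cdot \rangle$ is nondegenerate (it is twice the polar form of $N$), and $N(e) = 1 \neq 0$, so $(k\cdot e)^{\perp}$ is a $7$-dimensional subspace on which the restriction of $N$ is nondegenerate. Hence there exists $a \in (k\cdot e)^{\perp}$ with $N(a) \neq 0$. Standard composition-algebra identities imply that $\overline{a} = -a$ and $a^2 = -N(a) e$, so $C_1 := k\langle e, a\rangle$ is a $2$-dimensional subalgebra on which $N$ is nondegenerate (its Gram matrix is diagonal with entries $1, -N(a)$).

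Next I would iterate. Since $C_1$ is $2$-dimensional and $N|_{C_1}$ is nondegenerate, $A = C_1 \perp C_1^{\perp}$ as quadratic spaces, and $N$ is again nondegenerate on the $6$-dimensional $C_1^{\perp}$, so there exists $b \in C_1^{\perp}$ with $N(b) \neq 0$. The Cayley--Dickson type lemma (see \cite[\S 1.5]{SpringerVeldkamp}) tells me that $C_2 := C_1 \oplus C_1 \cdot b$ is a $4$-dimensional composition subalgebra with orthogonal basis $e, a, b, ab$ and that the norm stays multiplicative on it. The same argument gives $c \in C_2^{\perp}$ with $N(c) \neq 0$ and the composition subalgebra $C_3 := C_2 \oplus C_2 \cdot c$, which is $8$-dimensional and hence equal to $A$. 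This yields exactly the basis
\[ e_0 = e,\ e_1 = a,\ e_2 = b,\ e_3 = ab,\ e_4 = c,\ e_5 = ac,\ e_6 = bc,\ e_7 = (ab)c, \]
and $N(a)N(b)N(c) \neq 0$ by construction.

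The only thing that needs care is orthogonality of the doubled basis: I would invoke the standard identities $\langle xu, yu\rangle = N(u)\langle x, y\rangle$ and $\langle x, yu\rangle = 0$ for $x, y \in C_i$ and $u \perp C_i$ with $N(u) \neq 0$, which hold in every composition algebra and ensure that $C_i$ is orthogonal to $C_i \cdot u$ and that the doubled basis inherits orthogonality from the basis of $C_i$. There is no real obstacle here beyond the bookkeeping; the essential input is the nondegeneracy of $N$ at each stage, which is guaranteed by $\kar k \neq 2$ and by the fact that the composition subalgebras produced along the way carry nondegenerate norms.
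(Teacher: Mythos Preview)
Your proposal is correct and follows the same Cayley--Dickson doubling argument that underlies the reference the paper cites (\cite[Corollary~1.6.3]{SpringerVeldkamp}); the paper itself gives no proof beyond that citation. Your justification that each orthogonal complement contains an anisotropic vector (using nondegeneracy and $\kar k\neq 2$) and your use of the identities $\langle xu,yu\rangle=N(u)\langle x,y\rangle$ and $\langle x,yu\rangle=\langle \bar y x,u\rangle=0$ are exactly the ingredients Springer and Veldkamp use.
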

\begin{proof}
	See \cite[Corollary~1.6.3]{SpringerVeldkamp}.
\end{proof}
\begin{remark}\label{standardbasis}
	In case $k$ is algebraically closed, we can assume $N(a)=N(b)=N(c)=1$, and we call such a basis a \emph{standard basis} for the octonions. For a basis of this form, the multiplication is encoded by the Fano plane (see \cite[Figure~1]{jariprevious}). We will use this basis in explicit computations.
\end{remark}
It is a well-known fact that up to isomorphism, there is only one octonion algebra over an algebraically closed field, and only one split one over arbitrary fields.
\begin{proposition}
	Any two split octonion algebras over a field $k$ are isomorphic.
\end{proposition}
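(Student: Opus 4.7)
The plan is to establish the classical fact that, over any field $k$, a split octonion algebra is determined up to isomorphism by the isomorphism class of its norm form as a quadratic space, and that the norm form of any split octonion algebra is the hyperbolic form of rank $8$.

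First I would invoke (or briefly reprove) the general principle that two octonion algebras over $k$ are isomorphic if and only if their norms are isometric as nondegenerate quadratic forms; this is the content of \cite[Theorem~1.7.1]{SpringerVeldkamp}. Concretely, given an isometry between norm forms one can adjust it (using the fact that the identity is the unique element of norm $1$ fixed by the standard involution up to the usual symmetries) so that it sends identity to identity, and then use the Cayley--Dickson doubling combined with the composition law $N(xy)=N(x)N(y)$ to propagate the isometry into an algebra isomorphism.

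Next I would argue that the norm form of a split octonion algebra is hyperbolic. Since $N$ is isotropic by assumption, pick $a \in A$ with $a \neq 0$ and $N(a)=0$. Using the non-degeneracy of $\langle\cdot,\cdot\rangle$, choose $b$ with $\langle a,b\rangle=1$; the plane $\spank(a,b)$ is then a hyperbolic plane inside $(A,N)$. One can iterate: the orthogonal complement of this plane is again a nondegenerate quadratic space containing an isotropic vector (because a split composition subalgebra generates such vectors, e.g.\@ via multiplication by $a$), and so on until $N$ is written as the orthogonal sum of four hyperbolic planes. Alternatively, one can simply quote the standard fact that over any field there is a unique (up to isometry) nondegenerate isotropic $8$-dimensional quadratic form of maximal Witt index $4$ admitting multiplicative structure, namely the hyperbolic form $\mathbb{H}^{\oplus 4}$.

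Combining these two steps, any two split octonion algebras over $k$ have isometric norm forms, and therefore are isomorphic as octonion algebras. The main obstacle is the first step -- the passage from an isometry of norms to an isomorphism of algebras -- but this is exactly the content of the theorem in \cite{SpringerVeldkamp} that I would cite rather than reprove in detail.
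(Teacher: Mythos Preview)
Your proposal is correct and in fact gives more than the paper does: the paper's proof is a bare citation to \cite[Theorem~1.8.1]{SpringerVeldkamp}, whereas you outline the two-step argument that sits behind that theorem (norm forms classify octonion algebras, and an isotropic composition norm is hyperbolic). So there is no conflict of approach---you are unpacking the cited result rather than taking a different route.

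One small remark on the second step: the iteration you sketch (``the orthogonal complement \ldots\ contains an isotropic vector, e.g.\ via multiplication by $a$'') is where the composition-algebra structure is genuinely used, and it deserves one more sentence. The point is that left multiplication by an isotropic $a$ has nonzero kernel and nonzero image consisting of isotropic vectors, and this lets you propagate isotropy into the orthogonal complement; alternatively one invokes that a composition algebra with isotropic norm contains a split quaternion subalgebra, forcing the whole norm to be hyperbolic. Either way your outline is sound, but if you were writing this out in full you would want to make that step explicit rather than gesture at it.
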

\begin{proof}
	See \cite[Theorem 1.8.1]{SpringerVeldkamp}.
\end{proof}

\subsection{The Albert algebras}\label{albertsubsection} We will only consider the Albert algebras over fields $k$ with $\kar k\neq 2,3$.
	In this case, the $27$-dimensional representation for type $E_6$ can be constructed as matrices over the octonions (at least for some groups of this type, see \cref{prop:E6trivialtits} and the introduction of \cref{sece6}). We give a short overview in this subsection. 
	
	We regard the split Albert algebra as the hermitian matrices $\mathcal{H}_3{(\mathbb{O})}$, where $\Oct$ denotes the split octonions over a field $k$. We write\footnote{The notation was revised slightly compared to \cite{jariprevious} to avoid confusing index notation.}
\[ \mathbf{1}_1 \coloneqq \begin{bsmallmatrix}
	1&0&0\\
	0&0&0\\
	0&0&0
\end{bsmallmatrix},\mathbf{1}_2 \coloneqq \begin{bsmallmatrix}
	0&0&0\\
	0&1&0\\
	0&0&0
\end{bsmallmatrix},\mathbf{1}_3 \coloneqq \begin{bsmallmatrix}
	0&0&0\\
	0&0&0\\
	0&0&1
\end{bsmallmatrix}, \]
and for octonions $a,b,c\in \Oct$
\[ \upi{a}{1} \coloneqq \begin{bsmallmatrix}
	0&0&0\\
	0&0&a\\
	0&\bar{a}&0
\end{bsmallmatrix}, \upi{b}{2} \coloneqq \begin{bsmallmatrix}
	0&0&\bar{b}\\
	0&0&0\\
	b&0&0
\end{bsmallmatrix},\upi{c}{3} \coloneqq \begin{bsmallmatrix}
	0&c&0\\
	\bar{c}&0&0\\
	0&0&0
\end{bsmallmatrix}. \]	
	A generic element of $\mathcal{H}_3{(\mathbb{O})}$ is then of the form
\[  
\begin{bsmallmatrix}
	\alpha_1 & c & \bar{b} \\
	\bar{c} & \alpha_2 & a \\
	b & \bar{a} & \alpha_3
\end{bsmallmatrix}
= \alpha_1\mathbf{1}_1+\alpha_2\mathbf{1}_2+\alpha_3\mathbf{1}_3+\upi{a}{1}+\upi{b}{2}+\upi{c}{3}\text{,}
\]

with $\alpha_1,\alpha_2,\alpha_3\in k$ and $a,b,c\in \mathbb{O}$. To keep the notation uniform with the octonion algebras, $e = \mathbf{1}_1+\mathbf{1}_2+\mathbf{1}_3$ denotes the unit of an Albert algebra.

\begin{definition}\label{def:albertalg}\leavevmode
	\begin{enumerate}
		\item We define the split Albert algebra to be the hermitian matrices over the split octonions $\mathcal{H}_3(\mathbb{O})$, equipped with the Jordan product, i.e. for $a,b\in \mathcal{H}_3(\Oct)$:
			\[ a\cdot b \coloneqq \tfrac{ab+ba}{2}\text{,} \]
			where the multiplication on the right-hand side is the usual matrix multiplication.
		
		\item We define a nondegenerate bilinear form $\langle \cdot,\cdot \rangle\colon \mathcal{H}_3(\mathbb{O}) \times \mathcal{H}_3(\mathbb{O}) \to k $ by the formula
		\[\langle x,y \rangle  \coloneqq \Tr(xy)\text{.}\]
		We will sometimes refer to this bilinear form as the \emph{trace form}.
		
		\item A not-necessarily split Albert algebra is a $k$-algebra equipped with a bilinear form $\langle \cdot,\cdot\rangle_A $ and a trilinear form $\langle \cdot,\cdot,\cdot \rangle_A$ such that there exists an isomorphism of algebras $A\otimes_k\overline{k}\cong \mathcal{H}_3(\mathbb{O})$ sending the extension of the bilinear form $ \langle \cdot,\cdot\rangle_A $ to $\langle \cdot,\cdot \rangle$ and the extension of the trilinear form $\langle \cdot,\cdot,\cdot \rangle_A$ to the trilinearisation $\langle \cdot,\cdot,\cdot \rangle$ of the usual determinant form $\det$ on the $3\times3$ matrices over $\Oct$.
		
			We will omit writing the index $A$ throughout this article.
	\end{enumerate}	
\end{definition}
\begin{remark}
	The trilinearisation of the cubic $\det$ is divided by a scalar $6$, i.e.\@ \[ \langle x,y,z \rangle = \tfrac{1}{6}( \det(x+y+z) - \det(x+y) -\det(y+z) - \det(x+z) + \det(x) + \det(y)+ \det(z) ), \]
	for all $x,y,z\in A$.
\end{remark}
\begin{remark}
	We will write both the trace form on the Albert algebra and the bilinear form on the octonions as $\langle\cdot,\cdot\rangle$. This should not cause any confusion however, as for two octonions $a,b\in \mathbb{O}$ and $i,j\in \{1,2,3\}$ we have $\langle \upi{a}{i},\upi{b}{j}\rangle = \delta_{i,j}\langle a,b\rangle$ where the $\langle \cdot ,\cdot \rangle$ in the left hand side and right hand side are bilinear forms in different algebras.
\end{remark}
We immediately have some routine computations we will use regularly.
\begin{lemma} \label{albertmultiplication} Using the notation introduced above, the following equations hold in the Albert algebra over an algebraically closed field $k$:
	\begin{enumerate}
		\item $\mathbf{1}_i^2 = \mathbf{1}_i$ for $i\in \{1,2,3\}$,
		\item $\mathbf{1}_i\cdot \mathbf{1}_j = 0$ for $i\neq j$ and $i,j\in \{1,2,3\}$,
		\item $\mathbf{1}_i \cdot \upi{a}{i} = 0$ for $i\in \{1,2,3\}$ and $a\in \mathbb{O}$,
		\item $\mathbf{1}_i \cdot \upi{a}{j} = \tfrac{1}{2}\upi{a}{j}$ for $i,j\in \{1,2,3\}$ with $i\neq j$ and $a\in \Oct$,
		\item $\upi{a}{i}\cdot \upi{b}{i}$ = $\tfrac{1}{2}\langle a,b\rangle(\mathbf{1}_j+\mathbf{1}_k)$ for $\{i,j,k\} = \{1,2,3\}$ and $a,b\in \Oct$,
		\item $\upi{a}{i}\cdot \upi{b}{j} = \tfrac{1}{2}\upi{\left(\overline{ab}\right)}{k}$ for $i,j,k$ a cyclic permutation of $1,2,3$.
	\end{enumerate}
\end{lemma}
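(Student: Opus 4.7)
The plan is a direct computational verification: each of the six identities involves matrices with only one or two nonzero octonion entries in specified positions, so I would expand the Jordan product $x\cdot y = \tfrac{1}{2}(xy+yx)$ using ordinary $3\times 3$ matrix multiplication over $\Oct$. A useful preliminary observation is that although octonion multiplication is non-associative, every single entry of a product $\upi{a}{i}\upi{b}{j}$ (or similar) involves at most \emph{one} genuine octonion multiplication, so the non-associativity never enters into the entry-by-entry calculation.

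For parts (i)--(iv) the content is essentially that of matrix units: $\mathbf{1}_i = e_{ii}$ while $\upi{a}{i}$ is supported in the $(j,k)$ and $(k,j)$ positions with $\{i,j,k\}=\{1,2,3\}$. Multiplying by $e_{ii}$ on the left or right selects the $i$-th row or column, from which (i)--(iii) are immediate, and for (iv) the sum $\mathbf{1}_i\upi{a}{j}+\upi{a}{j}\mathbf{1}_i$ recovers exactly $\upi{a}{j}$, halved by the Jordan normalisation.

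For the more substantive items (v) and (vi) the key inputs are the octonion identities
\[ a\bar{b}+b\bar{a} \;=\; \langle a,b\rangle \;=\; \bar{a}b+\bar{b}a \qquad\text{and}\qquad \overline{ab}=\bar{b}\bar{a}, \]
the first coming from polarising $N(x)=x\bar{x}=\bar{x}x$ and the second being the anti-automorphism property of the standard involution. Expanding $\upi{a}{i}\cdot\upi{b}{i}$ produces diagonal entries $a\bar{b}+b\bar{a}$ and $\bar{a}b+\bar{b}a$ in positions $(j,j)$ and $(k,k)$, giving (v). Expanding $\upi{a}{i}\cdot\upi{b}{j}$ (say $i=1,j=2$) yields entries $ab$ and $\bar{b}\bar{a}=\overline{ab}$ in the $(k,i)$ and $(i,k)$ slots respectively, and this is exactly $\upi{\overline{ab}}{k}$ after halving.

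The only real obstacle is bookkeeping: one must track the order of octonion factors carefully and invoke only the two identities above. By symmetry under cyclic permutation of the labels $\{1,2,3\}$, it suffices to verify each identity for one representative choice of $i,j,k$ and then transport to the other cases, which keeps the calculation short.
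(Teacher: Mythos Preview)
Your proposal is correct; the direct matrix computation you outline is exactly how one verifies these identities, and the two octonion relations you isolate are precisely what is needed for (v) and (vi). The paper itself does not carry out this calculation at all but simply cites \cite[Identities~(4.26)--(4.31)]{nonassocalg}, so your write-up in fact supplies more detail than the paper. One minor bookkeeping slip: in your sketch of (vi) with $i=1$, $j=2$, the nonzero entries of the Jordan product sit in positions $(1,2)$ and $(2,1)$, i.e.\ the $(i,j)$ and $(j,i)$ slots, not $(i,k)$ and $(k,i)$; the conclusion is unaffected.
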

\begin{proof}
	These are \cite[Identities~(4.26)-(4.31)]{nonassocalg}.
\end{proof}
We will also need the so-called cross product, as it has a special relation with respect to groups of type $E_6$.
\begin{definition}[{\cite[Equation~5.16]{SpringerVeldkamp}}]
	For $x,y\in A$ elements of an Albert algebra, we define $x\times y\in A$ to be the unique element such that
	\[3\langle x,y,z\rangle  = \langle x\times y,z\rangle \]
	holds for all $z\in A$.
\end{definition}
\begin{lemma}\label{lem:crossproductproperties}
	\begin{enumerate}
		\item The stabiliser $G$ of the determinant is a simply connected group of type $E_6$.
		\item Taking the transpose $^{\top}$ with respect to the bilinear form $\langle\cdot,\cdot\rangle$ and then inverting is an order $2$ outer automorphism of $G$.
		\item For $g\in G(k)$ and $x,y\in A$ we have $x^g\times y^g=(x\times y)^{g^{-\top}}$.
		\item We have $x\times y = xy - \tfrac{1}{2}\langle x,e \rangle y- \tfrac{1}{2}\langle y,e \rangle x- \tfrac{1}{2}\langle x,y \rangle e + \tfrac{1}{2}\langle x,e \rangle\langle y,e\rangle e$. 
	\end{enumerate}
\end{lemma}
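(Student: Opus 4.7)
The plan is to handle (i) by citation, (iii) and (iv) by direct computations with the defining trilinear form, and (ii) by combining (iii) with a classical Jordan-algebraic identity; the principal obstacle is (ii). Part (i) is classical: the isometry group of the Freudenthal cubic on the Albert algebra is simply connected of type $E_6$, and I would invoke \cite{SpringerVeldkamp} directly.

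For (iii), use the defining relation $3\langle x,y,z\rangle = \langle x\times y,z\rangle$ together with the $G$-invariance of the trilinearisation of $\det$: for every $z\in A$,
\begin{equation*}
\langle x^g \times y^g, z\rangle = 3\langle x^g,y^g,z\rangle = 3\langle x,y,z^{g^{-1}}\rangle = \langle x\times y, z^{g^{-1}}\rangle = \langle (x\times y)^{g^{-\top}}, z\rangle,
\end{equation*}
where the last equality uses the defining property $\langle u^h,v\rangle = \langle u, v^{h^\top}\rangle$ of the transpose. Non-degeneracy of $\langle\cdot,\cdot\rangle$ then yields $x^g\times y^g = (x\times y)^{g^{-\top}}$. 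For (iv), both sides of the proposed formula are symmetric bilinear in $x,y$ (since the Jordan product is commutative and the remaining terms are visibly symmetric), so by polarisation it suffices to check the case $y=x$, in which the claim reduces to
\[ x\times x = x^2 - \langle x,e\rangle x + \tfrac{1}{2}\bigl(\langle x,e\rangle^2 - \langle x,x\rangle\bigr) e. \]
With $\Tr(x) = \langle x,e\rangle$ and $S(x) = \tfrac{1}{2}(\Tr(x)^2-\Tr(x^2))$, this is the standard adjoint identity $x^\# = x^2 - \Tr(x)\, x + S(x)\,e$ for the Albert algebra, which is a consequence of Cayley--Hamilton combined with the relation $\langle x^\#, z\rangle = 3\langle x,x,z\rangle = \langle x\times x, z\rangle$; alternatively, one can verify both sides of (iv) on the basis $\mathbf{1}_i, \upi{a}{i}$ using \cref{albertmultiplication}.

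For (ii), the map $g\mapsto g^{-\top}$ is an order-$2$ group automorphism of $\GL(A)$ since $(gh)^{-\top} = g^{-\top}h^{-\top}$ and $(g^{-\top})^{-\top} = g$. To show that it stabilises $G$, combine (iii) with the classical identity $\det(x\times x) = \det(x)^2$: the map $\phi\colon A\to A$, $\phi(x) = x\times x$, is dominant, and (iii) gives $\phi(gx) = g^{-\top}\phi(x)$, whence
\[ \det\!\bigl(g^{-\top} \phi(x)\bigr) = \det\!\bigl(\phi(gx)\bigr) = \det(gx)^2 = \det(x)^2 = \det\!\bigl(\phi(x)\bigr) \]
on the dense image of $\phi$, hence everywhere, forcing $g^{-\top} \in G$. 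Outerness follows from a weight argument: twisting the $G$-module $V(\omega_1) = A$ by $g\mapsto g^{-\top}$ yields the dual module $V(\omega_6)$, and since $\omega_1 \neq \omega_6$ these are non-isomorphic, so the automorphism cannot be inner. In practice it is probably cleanest to bypass the Jordan-algebra identity $\det(x\times x) = \det(x)^2$ and instead invoke the classical realisation of the outer automorphism of the Albert-algebra $E_6$ as the transpose-inverse from \cite{SpringerVeldkamp}.
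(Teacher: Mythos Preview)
Your proposal is correct. The paper's own proof is a one-line citation to \cite[Lemma~5.2.1 and Section~7.3]{SpringerVeldkamp}, so there is nothing to compare at the level of strategy: you have simply unpacked the content of that reference. Your argument for (iii) via non-degeneracy of the trace form is exactly the standard one, your reduction of (iv) to the adjoint identity $x^\# = x^2 - \Tr(x)x + S(x)e$ is the expected route, and your treatment of (ii) is a clean way to package the classical fact that $g\mapsto g^{-\top}$ preserves $G$; the outerness argument via $V(\omega_1)\not\cong V(\omega_6)$ is also standard. The only point worth flagging is that your dominance argument for $\phi(x)=x\times x$ tacitly uses that the base field is infinite (so that Zariski-density passes to identities of polynomial functions), and that the identity $\det(x\times x)=\det(x)^2$ is itself one of the basic cubic-norm-structure identities you would in any case be citing from \cite{SpringerVeldkamp}; your closing remark acknowledging this is well taken.
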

\begin{proof}
	See \cite[Lemma~5.2.1 and Section~7.3]{SpringerVeldkamp}.
\end{proof}
In fact, we have a converse direction as well. For the definition of Tits algebras, see \cite[Section~27]{involutions}.
\begin{proposition}\label{prop:E6trivialtits}
	Any simply connected group of type $^1E_6$ with trivial Tits algebras occurs as the stabiliser of the cubic norm of an Albert algebra.
\end{proposition}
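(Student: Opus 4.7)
The plan is Galois descent from the split case, part (i) of \cref{lem:crossproductproperties}. Let $G_0$ denote the split simply connected group of type $E_6$, realised there as the stabiliser of $\det$ on the split Albert algebra $\mathcal{H}_3(\Oct)$, and let $F_4\hookrightarrow G_0$ be its subgroup stabilising the identity $e$; this coincides with the automorphism group of the Albert algebra. Isomorphism classes of Albert algebras over $k$ are classified by $H^1(k,F_4)$, and the inclusion induces a map of pointed sets
\[
\Phi\colon H^1(k,F_4)\longrightarrow H^1(k,G_0)
\]
sending the class of $A$ to the class of the twisted group $\Aut(A,\det)$. The proposition amounts to showing that the image of $\Phi$ is exactly the subset of $H^1(k,G_0)$ corresponding to inner forms of $E_6^{\mathrm{sc}}$ with trivial Tits algebras.

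First I would check the easier inclusion: if $A$ corresponds to $[\xi]\in H^1(k,F_4)$, then $A$ itself realises the $27$-dimensional Weyl module $V(\omega_1)$ of the twisted group as a $k$-representation. Its existence over $k$ precludes a Galois swap of $\omega_1$ with $\omega_6$ (so the twist is inner), and forces the Tits algebra of $\omega_1$ to be trivial (the Tits algebra being the endomorphism algebra of a minimal $k$-form of this representation). Hence $\Phi$ lands in the desired subset.

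For the converse, let $G$ be simply connected of inner type $E_6$ with trivial Tits algebras. Triviality of the Tits algebra produces a $27$-dimensional $k$-representation $W$ of $G$ which becomes $V(\omega_1)$ after base change. A character computation shows the space of $G_0$-invariant cubic forms on $V(\omega_1)$ is one-dimensional, so by Galois descent $W$ carries a (up-to-scalar) unique nonzero $G$-equivariant cubic form $N$. Over $\bar k$ the pair $(W,N)$ is $G_0$-equivariantly isomorphic to $(\mathcal{H}_3(\Oct),\det)$, hence arises as a twist of the latter by some cocycle $\eta\in Z^1(k,G_0)$. To show $\eta$ lifts through $\Phi$, I would appeal to the exact sequence of pointed sets attached to $1\to F_4\to G_0\to G_0/F_4\to 1$: lifting $\eta$ to $H^1(k,F_4)$ is equivalent to producing a $k$-point of the twisted homogeneous space $({}^{\eta}(G_0/F_4))(k)$, i.e.\ a distinguished identity element turning $(W,N)$ into an Albert algebra.

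The hard part is this last step, namely verifying that the hypothesis of trivial Tits algebras really forces such an identity to be defined over $k$. This is essentially the content of \cite[Theorem~1.4]{Garibaldie67} cited above, and can also be extracted from the classification of cubic norm structures as in \cite[Chapter~5]{SpringerVeldkamp} or \cite[Section~37]{involutions}.
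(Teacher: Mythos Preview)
The paper's own proof is a one-line citation to \cite[Theorem~1.4]{Garibaldie67}; it does not argue anything itself. Your proposal is therefore strictly more informative: you lay out the Galois-cohomological scaffolding (the map $H^1(k,F_4)\to H^1(k,G_0)$ induced by $F_4\hookrightarrow G_0$, the easy direction via existence of the $27$-dimensional $k$-representation, and the reduction of the converse to finding a $k$-point on the twisted $G_0/F_4$), and then defer the genuinely nontrivial step to the same reference the paper cites. That scaffolding is correct, and your identification of the crux---producing a $k$-rational identity element on $(W,N)$---is exactly right. One small point worth tightening: you should note that the cocycle $\eta$ you obtain from $(W,N)$ really does map to the class of $G$ under $H^1(k,G_0)\to H^1(k,G_0^{\mathrm{ad}})$, i.e.\ that $\Stab(N)=G$ and not merely some form of $G_0$; this follows because $G$ acts faithfully on $W$ and both are smooth connected of the same dimension over~$\bar k$.
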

\begin{proof}
	See \cite[Theorem 1.4]{Garibaldie67}.
\end{proof}
\begin{proposition}\label{prop:derivationsstructure}
	The space of derivations of the cubic form $\det$ on the Albert algebra is given by
	\[ \Der(\det) = \spank_k \{ L_a, [L_a,L_b] \mid a,b \in W \text{ such that } \langle a,e\rangle = \langle b,e\rangle = 0\}. \]
\end{proposition}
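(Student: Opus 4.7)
My plan is to prove the identity by a two-way inclusion argument, using a classical cubic Jordan identity for the containment $\supseteq$ and a dimension count for $\subseteq$. By \cref{lem:crossproductproperties}(i), the stabiliser of $\det$ is a simply connected group of type $E_6$, so $\Der(\det)$ has dimension $78$. A linear map $D \in \End(W)$ lies in $\Der(\det)$ if and only if $\langle x, x, D x \rangle = 0$ for all $x \in W$, as one sees by polarising the infinitesimal invariance of $\det$.

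First, I would show $L_a \in \Der(\det)$ whenever $\langle a, e \rangle = 0$. The key input is the identity $x \cdot (x \times x) = \det(x)\, e$, which I would verify by expanding $x \times x$ via \cref{lem:crossproductproperties}(iv) and reducing $x^3$ using the degree--$3$ Cayley--Hamilton relation in the Albert algebra. Combining this with the defining relation $3\langle x, y, z\rangle = \langle x \times y, z\rangle$ and the invariance of the trace form under Jordan multiplication yields
\[
3 \langle x, x, L_a x \rangle \;=\; \langle x \times x,\, a x \rangle \;=\; \langle a,\, x \cdot (x \times x) \rangle \;=\; \det(x)\, \langle a, e \rangle,
\]
which vanishes precisely when $\langle a, e \rangle = 0$. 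Brackets $[L_a, L_b]$ with $a, b$ both traceless then lie in $\Der(\det)$ automatically, since $\Der(\det)$ is closed under commutator.

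For the reverse inclusion I would argue by dimensions. The subspace $\{L_a : \langle a, e \rangle = 0\}$ is $26$-dimensional. The inner derivations $\{[L_a, L_b] : a, b \in W\}$ span $\Der(J)$, a Lie algebra of type $F_4$ with dimension $52$; restricting to traceless $a, b$ loses nothing, because $[L_e, L_b] = 0$. These two subspaces intersect trivially, as any $L_a$ that is a Jordan derivation satisfies $a = L_a(e) = 0$. Consequently the span has dimension $26 + 52 = 78 = \dim \Der(\det)$, forcing equality. The main obstacle is the Jordan identity $x \cdot (x \times x) = \det(x)\, e$; although completely classical, it is the one nontrivial bridge between the Jordan multiplication and the cubic form, and all other parts of the argument follow quickly from it.
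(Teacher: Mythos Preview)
Your argument is correct. The paper itself gives no proof at all: it simply cites \cite[Theorem~4.12]{nonassocalg} and moves on. So your proposal is strictly more informative than what appears in the paper, and the strategy you outline---verify $L_a\in\Der(\det)$ for traceless $a$ via the cubic identity $x\cdot(x\times x)=\det(x)\,e$, close under brackets, then match dimensions $26+52=78$ against $\dim\mathfrak{e}_6$---is exactly the classical route and is essentially what one finds behind the cited reference. The only point worth flagging is that you invoke $\Der(J)\cong\mathfrak{f}_4$ and the fact that inner derivations span it as a known input; this is standard for the Albert algebra but is itself a nontrivial result, so in a fully self-contained write-up you would want to cite it as well.
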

\begin{proof}
	This is \cite[Theorem~4.12]{nonassocalg}.
\end{proof}

\begin{proposition}\label{prop:maxweightisrank1}
	Let $v^+\in W$ be the maximal weight vector with respect to a certain choice of maximal torus and Borel subgroup of $G$. Then $v^+\times v^+=0$. Moreover, $G$ acts transitively on the elements $v\in W$ with $v\times v = 0$. We call such elements rank $1$ elements.
\end{proposition}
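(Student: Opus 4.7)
My plan is to treat the two assertions separately, using representation-theoretic arguments.

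For the first, observe that by \cref{lem:crossproductproperties}(iii) the cross product is $G$-equivariant up to the twist by the outer automorphism $g\mapsto g^{-\top}$, so $v\mapsto v\times v$ factors through a $G$-equivariant morphism $\Sq^2 W \to W^{\tau}$, where $W^{\tau}$ denotes $W$ with the $G$-action twisted by this outer automorphism. Since the outer automorphism of $E_6$ interchanges $\omega_1$ and $\omega_6$, we have $W^{\tau} \cong V(\omega_6)$ as $G$-modules. A character computation (\cref{reptheoryremark}) yields
\begin{equation*}
	\Sq^2 V(\omega_1) = V(2\omega_1) \oplus V(\omega_6),
\end{equation*}
so the cross product is, up to a nonzero scalar, the unique $G$-equivariant projection onto the $V(\omega_6)$ summand. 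The tensor $v^+\otimes v^+$ has weight $2\omega_1$, which appears only once in $\Sq^2 V(\omega_1)$, so it must be the highest weight vector of the $V(2\omega_1)$ summand. Hence the projection kills it, giving $v^+\times v^+=0$.

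For transitivity, let $X=\{v\in W : v\times v=0\}$, a closed $G$-invariant affine cone containing $G\cdot v^+$. Since $v^+$ is the highest weight vector of the cominuscule representation $V(\omega_1)$, its stabiliser $\Stab_G(k v^+)$ is the standard parabolic $P_1$ of Levi type $D_5$, so $G\cdot[v^+]\cong G/P_1$ is closed in $\mathbb{P}(W)$ of dimension $16$ and the cone $G\cdot v^+ \cup \{0\}$ is a $17$-dimensional irreducible subvariety of $X$. I would then compute the tangent space $T_{v^+}X = \{u\in W: v^+\times u = 0\}$ explicitly by taking $v^+ = \mathbf{1}_1$ and applying \cref{lem:crossproductproperties}(iv) together with the multiplication rules in \cref{albertmultiplication}, obtaining $\dim T_{v^+}X=17$. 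This makes $X$ smooth of dimension $17$ at $v^+$, so exactly one irreducible component of $X$ passes through $v^+$, namely $\overline{G\cdot v^+}=G\cdot v^+\cup\{0\}$. To rule out any other component, note that connectedness of $G$ forces every irreducible component of $X$ to be $G$-stable, so its projectivisation is a $B$-invariant closed subvariety of $\mathbb{P}(W)$ and hence contains the unique $B$-fixed line $[v^+]$ by the Borel fixed point theorem; smoothness of $X$ at $v^+$ then forces $X$ to be irreducible, yielding $X=G\cdot v^+\cup \{0\}$ and the desired transitivity.

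The main obstacle in this plan is the explicit tangent space computation, which is elementary but mildly tedious in octonionic bookkeeping. An attractive alternative is to bypass the computation entirely by invoking the classical fact that the rank $1$ elements of a reduced Albert algebra form a single orbit under the norm similitude group, see e.g.\ \cite[Section~7.3]{SpringerVeldkamp}.
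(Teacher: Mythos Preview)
Your argument is correct. The paper does not actually give a proof of this proposition: it simply refers the reader to \cite[Subsection~7.10]{garibaldi2006geometries}. So your proposal supplies considerably more than the paper does.

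For the first assertion, your representation-theoretic argument via the decomposition $\Sq^2 V(\omega_1)\cong V(2\omega_1)\oplus V(\omega_6)$ is a clean, self-contained proof and more informative than a bare citation. For the second, your tangent-space and Borel fixed-point argument is the standard algebro-geometric route to identifying an invariant cone with a single orbit closure; it is correct (the computation $\dim T_{\mathbf{1}_1}X=17$ does go through using \cref{lem:crossproductproperties}(iv) and \cref{albertmultiplication}), though as you note yourself the bookkeeping is somewhat heavier than simply invoking the known orbit structure of rank~$1$ elements under the norm group, which is what the cited references ultimately rely on. Either route is fine; the alternative you mention at the end is closest in spirit to what the paper does.
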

\begin{proof}
	See for example \cite[Subsection 7.10]{garibaldi2006geometries}.
\end{proof}
\section{Outer automorphisms on representations of type $E_6$}\label{sec:outaut}

The Dynkin diagram of $E_6$ has non-trivial symmetries, which sets it apart from the other exceptional types. This \emph{outer automorphism} allows for some interesting phenomena to occur. For example, the split Lie algebra of type $E_6$ has as automorphism group the Chevalley group of type $E_6$, extended by a $C_2$ symmetry, which corresponds to the $C_2$ symmetry of the Dynkin diagram $\Delta$.

For a representation $V$, we could ask if there is a linear map $V\to V$ that models this outer automorphism, i.e.\@ an element in $\GL(V)$ such that conjugation induces the outer automorphism on $G$. 
This question was already answered in \cite[Proposition~2.2]{linpres}. The proposition tells us that the outer automorphisms we can model are precisely those that fix the highest weight of $V$.

\begin{proposition}[{\cite[Proposition~2.2]{linpres}}]
	Let $G$ be a simple algebraic group, and $V = V(\lambda)$ an irreducible representation on which it acts faithfully. Then the normaliser of $G$ in $\GL(V)$ is smooth, and can be identified with $((\mathbb{D}\times  G)/Z)\rtimes \Aut(\Delta,\lambda)$, where $\Aut(\Delta,\lambda)$ is the automorphism group of the associated Dynkin diagram $\Delta$ that fixes $\lambda$, $\mathbb{D}$ is the group diagonal matrices, and $Z$ the center of $G$.
\end{proposition}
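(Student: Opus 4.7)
The plan is to analyse the normaliser $N := N_{\GL(V)}(G)$ through the conjugation sequence
\[ 1 \to C_{\GL(V)}(G) \to N \xrightarrow{c} \Aut(G), \]
with $c(g)(h) = ghg^{-1}$. I would identify in turn the kernel, the image, and a splitting, and then check smoothness.

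For the kernel, since $V$ is irreducible, Schur's lemma gives $C_{\GL(V)}(G) = \mathbb{D} \cong \mathbb{G}_m$, the scalar matrices. Because $V$ is faithful, $\mathbb{D}\cap G$ coincides with the centre $Z$ of $G$ (which acts by scalars on an irreducible representation), so the subgroup generated by $\mathbb{D}$ and $G$ inside $N$ is isomorphic to $(\mathbb{D}\times G)/Z$, with $Z$ embedded antidiagonally.

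For the image, inner automorphisms are automatically realised by conjugation by elements of $G\subset N$, so the question reduces to which outer automorphisms lift. Identify $\Out(G) = \Aut(\Delta)$. An outer automorphism $\tau$ lifts to some $g\in \GL(V)$ if and only if the twisted representation $V^\tau := V\circ \tau$ is $G$-equivariantly isomorphic to $V$: the element $g$ is precisely the required intertwiner. Since irreducible representations of $G$ are classified by their highest weight and $\tau$ acts on the weight lattice permuting the dominant Weyl chamber, this happens precisely when $\tau$ fixes $\lambda$, that is, when $\tau\in \Aut(\Delta,\lambda)$. A splitting $\Aut(\Delta,\lambda)\to N$ can be produced by fixing a pinning of $G$, extending $\tau$ to an automorphism of $G$ via the pinning, and sending a fixed highest weight vector of $V$ to itself under the induced automorphism of the corresponding Weyl module; this defines a group homomorphism whose image normalises $G$ and projects isomorphically onto $\Aut(\Delta,\lambda)$. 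Combining these ingredients yields the desired identification $N = ((\mathbb{D}\times G)/Z)\rtimes \Aut(\Delta,\lambda)$.

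Smoothness then follows because the identity component is $\mathbb{D}\cdot G$, a quotient of two smooth groups by a central closed subscheme, and the component group $\Aut(\Delta,\lambda)$ is finite constant (hence étale). The main obstacle I anticipate is making the splitting genuinely multiplicative rather than only set-theoretic: one needs to verify that the normalised lifts of diagram automorphisms composed via pinned Chevalley generators really compose to the lift of the product, which amounts either to an explicit computation on a chosen Chevalley basis or to a cohomological vanishing argument on $H^2(\Aut(\Delta,\lambda),\mathbb{D})$; for the small diagram automorphism groups relevant here (of order at most $2$ in type $E_6$), this will be automatic.
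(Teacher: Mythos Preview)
The paper does not provide its own proof of this proposition: it is quoted verbatim as \cite[Proposition~2.2]{linpres} and used as a black box, so there is nothing in the present paper to compare your argument against.

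That said, your sketch follows the standard route and is essentially correct. The identification of the centraliser with $\mathbb{D}$ via Schur's lemma, the observation that $\mathbb{D}\cap G = Z$, and the criterion that an outer automorphism $\tau$ lifts to $\GL(V)$ precisely when $V^\tau \cong V$ (equivalently $\tau(\lambda)=\lambda$) are all sound. The one place to be careful is the splitting: you correctly flag that the lifts chosen via a pinning must compose strictly, not just up to scalars. For $\Aut(\Delta,\lambda)$ cyclic of order at most $2$ (as in all cases relevant to this paper) the obstruction lives in $H^2(C_2,\mathbb{G}_m)$, which vanishes, so your remark that this is automatic here is justified; for the general statement one does need the pinned-lift argument you outline, and that is where the work in \cite{linpres} actually lies.
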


The following statement can be easily deduced from \cite[Proposition~2.2]{linpres}. 

\begin{proposition}\label{prop:outeriffselfdual}
	An irrep $V$ of a simple algebraic group $G$ of type $E_6$ admits an outer automorphism if and only if it is self dual.
\end{proposition}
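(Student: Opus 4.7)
The plan is to reduce the statement to two well-known facts, one about duals of irreducible representations in general, and one specific to type $E_6$.

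First, recall that for any irreducible representation $V(\lambda)$ of a split simple algebraic group, the dual representation satisfies $V(\lambda)^\ast \cong V(-w_0 \lambda)$, where $w_0$ is the longest element of the Weyl group acting on the weight lattice. In particular, $V(\lambda)$ is self-dual if and only if $-w_0 \lambda = \lambda$. The same criterion descends to non-split inner forms: a representation is self-dual (as a representation of the non-split group) if and only if its highest weight, which is preserved by the Galois action, satisfies the same condition.

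Second, specialise to type $E_6$. It is standard that $-w_0$ acts on the root system of $E_6$ as the unique non-trivial diagram automorphism $\tau$; in Bourbaki labelling, $\tau$ swaps $\omega_1 \leftrightarrow \omega_6$ and $\omega_3 \leftrightarrow \omega_5$ and fixes $\omega_2,\omega_4$. This is a finite check on the root system of $E_6$ (in contrast to $E_7, E_8, F_4, G_2$, where $-w_0$ is the identity, so there is nothing to compare). Hence $-w_0\lambda = \lambda$ if and only if $\tau\lambda = \lambda$.

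Finally, apply the preceding Proposition~2.2 from \cite{linpres}: the representation $V(\lambda)$ admits a linear outer automorphism precisely when $\Aut(\Delta,\lambda)$ is non-trivial, i.e.\@ when the nontrivial diagram symmetry $\tau$ fixes $\lambda$. Chaining the two equivalences, $V(\lambda)$ admits an outer automorphism iff $\tau\lambda=\lambda$ iff $-w_0\lambda=\lambda$ iff $V(\lambda)$ is self-dual. The only step requiring actual verification is the identification $-w_0=\tau$ for $E_6$, which is classical and easy to check on the fundamental weights; everything else is an invocation of previously stated results.
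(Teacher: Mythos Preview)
Your proof is correct and follows essentially the same route as the paper: both identify the dual of $V(\lambda)$ via the highest weight $-w_0\lambda$, use the classical fact that for $E_6$ the involution $-w_0$ coincides with the nontrivial diagram automorphism $\tau$ (Bourbaki, Plate~V), and then equate ``$\tau$ fixes $\lambda$'' with ``$V$ admits an outer automorphism''. The only cosmetic difference is that you invoke \cite[Proposition~2.2]{linpres} directly for this last equivalence, whereas the paper reproves the relevant part inline by twisting the $G$-action by $\phi$ and appealing to irreducibility.
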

\begin{proof}
	Let $\rho$ denote the representation $\rho\colon V\to \GL(V)$.
	Let $\phi$ be an outer automorphism of $G$. We can assume it has order $2$, since $\Aut(G)$ is a split extension of $\Inn(G)$ and $\Out(G)$ by \cite[Corollary~23.47]{Milne}.
	Fix a maximal torus $T$ and Borel subgroup $B$. Suppose $V$ has highest weight $\lambda$ and outer automorphism $\varphi$ of type $\phi$. Then $V$ should be isomorphic to the irrep with highest weight $\phi(\lambda)$, where $\phi$ denotes the action of the outer automorphism induced on the Dynkin diagram (i.e.\@ the basis of the root system). By \cite[p.\@ 471]{Milne}, the dual representation of $V$ has highest weight $-w_0(\lambda)$, where $w_0\in W$ is the unique element for which $w_0$ which interchanges the positive and negative roots (this exists by \cite[Summary~21.41]{Milne}). We know precisely in which way $w_0$ acts (\cite[Plate (V), item (XI)]{Bourbaki46}). It is then clear that $\phi(\lambda) = -w_0(\lambda)$, thus $V$ is isomorphic to its dual representation.
	
	Conversely, suppose $V$ is self dual. Define a new action on $V$ by $g\cdot v  \coloneqq \rho(\phi(g))(v)$. Then $V$ with this representation of $G$ is also irreducible, and by the previous paragraph, there exists an isomorphism $\varphi$ between the two module structures. \end{proof}
\begin{corollary}\label{thm:extension}
	Let $G$ be a simple algebraic group over an infinite field $k$, $V=V(\lambda)$ an irrep, and $f\colon V^{\otimes r}\otimes {(V^*)}^{\otimes s}\to k$ a linear map. Assume $G\trianglelefteq \Stab(f)$, and that $\Stab(f)$ is smooth. Then there exists a subgroup $H\leq \Aut(\Delta)$ such that \[ \Stab(f)/(G\cdot \mu_{r-s}) \cong H .\] 
\end{corollary}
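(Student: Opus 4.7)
The strategy is to combine the preceding structural description of $N_{\GL(V)}(G)$ with a direct computation of which elements actually preserve $f$, and then read off the quotient as a subgroup of $\Aut(\Delta)$.

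First, since $G\trianglelefteq \Stab(f)$ by hypothesis, the inclusion $\Stab(f)\subseteq N_{\GL(V)}(G)$ is immediate, and by the preceding proposition this normaliser decomposes as
\[N_{\GL(V)}(G)\cong\bigl((\mathbb{D}\times G)/Z\bigr)\rtimes \Aut(\Delta,\lambda),\]
where $\mathbb{D}$ denotes the scalar matrices and $Z$ the center of $G$. Let $\pi\colon N_{\GL(V)}(G)\to \Aut(\Delta,\lambda)$ be the projection onto the semidirect factor, whose kernel $K$ is precisely the image of $\mathbb{D}\cdot G\subseteq \GL(V)$. I plan to restrict $\pi$ to $\Stab(f)$ and identify both its kernel and its image.

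For the kernel, since $G\subseteq \Stab(f)$ and $\mathbb{D}$ commutes with $G$ inside $K$, an element $dg\in \mathbb{D}\cdot G$ lies in $\Stab(f)$ if and only if the scalar $d$ does. A scalar $d\in \mathbb{D}$ acts on $V^{\otimes r}\otimes (V^*)^{\otimes s}$ as multiplication by $d^{r-s}$, and hence on a linear functional $f$ on this space as multiplication by $d^{s-r}$; because $f$ cannot be zero (otherwise $\Stab(f)=\GL(V)$ would not normalise $G$), the equation $d\cdot f=f$ forces $d^{r-s}=1$, i.e.\ $d\in \mu_{r-s}$. Hence $\Stab(f)\cap K = G\cdot \mu_{r-s}$. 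For the image, $H\coloneqq \pi(\Stab(f))$ is by construction a subgroup of $\Aut(\Delta,\lambda)\leq \Aut(\Delta)$.

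The conclusion then follows from the first isomorphism theorem applied to $\pi\vert_{\Stab(f)}\colon \Stab(f)\to \Aut(\Delta,\lambda)$, whose kernel and image we have just identified. The smoothness of $\Stab(f)$ together with the infinite base field $k$ ensure that the scheme-theoretic image and kernel are governed by their $k$-points, yielding the desired isomorphism $\Stab(f)/(G\cdot\mu_{r-s})\cong H$. The main (though routine) obstacle is the scalar computation in the kernel step: one must track carefully the contragredient action of $\GL(V)$ on $V^*$ to confirm that the correct exponent is $r-s$ rather than $r+s$, but once this is in place the rest of the argument is purely formal.
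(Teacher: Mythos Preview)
Your proof is correct and follows essentially the same approach as the paper: use $G\trianglelefteq \Stab(f)$ to place $\Stab(f)$ inside the normaliser $((\mathbb{D}\times G)/Z)\rtimes \Aut(\Delta,\lambda)$, then observe that the scalars fixing $f$ are exactly $\mu_{r-s}$. You have spelled out the first isomorphism theorem step and the edge case $f=0$ more explicitly than the paper does, but the underlying argument is identical.
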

\begin{proof}
	 The stabiliser $\Stab(f)$ has to be contained in the normaliser of $f$, which can be identified with $((\mathbb{D}\times  G)/Z)\rtimes \Aut(\Delta,\lambda)$. It only remains to observe that scalars preserve $f$ if and only if they are $(r-s)$th roots of unity.
\end{proof}

\begin{proposition}\label{prop:transposeisouterauto}
	Let $V$ be the $27$-dimensional Albert algebra. Then the transpose operator $\top \colon \End(V) \to \End(V)\colon \varphi \mapsto \varphi^{\top}$ with respect to the bilinear form $\langle\cdot , \cdot \rangle$ is an order $2$ outer automorphism on $\End(V)$.
\end{proposition}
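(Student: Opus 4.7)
The claim decomposes into two parts: that $\top$ squares to the identity on $\End(V)$, and that the conjugation action it induces on $\operatorname{Ad}(G)\subset \GL(\End(V))$ implements an outer (as opposed to inner) automorphism of $G$. My plan is to handle the first part by unravelling the defining identity of the transpose, and to reduce the second part to \cref{lem:crossproductproperties}(ii).

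For the involution claim, the defining relation $\langle \varphi^\top x,y\rangle = \langle x,\varphi y\rangle$ together with the symmetry of the trace form yields
\begin{equation*}
    \langle (\varphi^\top)^\top x,y\rangle = \langle x,\varphi^\top y\rangle = \langle \varphi^\top y,x\rangle = \langle y,\varphi x\rangle = \langle \varphi x,y\rangle,
\end{equation*}
and nondegeneracy of $\langle\cdot,\cdot\rangle$ then forces $(\varphi^\top)^\top = \varphi$, so $\top^2 = \id_{\End(V)}$.

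For the outerness, I would compute, for $g\in G$ and $\varphi\in\End(V)$,
\begin{equation*}
    \top\circ \operatorname{Ad}(g)\circ \top^{-1}(\varphi) = \bigl(g\varphi^\top g^{-1}\bigr)^\top = (g^\top)^{-1}\varphi\, g^\top = \operatorname{Ad}\bigl((g^\top)^{-1}\bigr)(\varphi).
\end{equation*}
Hence conjugation by $\top$ acts on $\operatorname{Ad}(G)$ as the map $g\mapsto (g^\top)^{-1}$, which is the order-$2$ outer automorphism singled out in \cref{lem:crossproductproperties}(ii). I do not foresee any real obstacle: the argument is essentially a one-line repackaging of that lemma. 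The only conceptual point worth flagging is that, although $V\cong V(\omega_1)$ itself is not self-dual, $\End(V)$ contains the self-dual summand $V(\omega_1+\omega_6)$ whose highest weight is fixed by the diagram automorphism, which is precisely the situation in which an outer automorphism of $G$ can be realised by a single linear map on the ambient representation (cf.\@ \cref{prop:outeriffselfdual}).
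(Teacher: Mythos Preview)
Your proof is correct and follows essentially the same route as the paper: both compute that $\top$ intertwines the $G$-action on $\End(V)$ via $g\mapsto g^{-\top}$ and then invoke \cref{lem:crossproductproperties}(ii). You are slightly more explicit, spelling out the involution property $\top^2=\id$ (which the paper leaves implicit) and framing the key computation as conjugation of $\operatorname{Ad}(g)$ rather than as the equivariance identity $(\varphi^{g})^\top=(\varphi^\top)^{g^{-\top}}$, but these are the same calculation in different notation.
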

\begin{proof}
	Let $\rho\colon G \to \GL(V)$ denote the representation. Then the $G$-action on $\End(V)$ is given by $\varphi^{\rho(g)} = \rho(g)\varphi\rho(g)^{-1}$. Thus $(\varphi^{\rho(g)})^\top = \rho(g)^{-\top}\varphi^\top \rho(g)^{\top} = (\varphi^\top)^{\rho(g)^{-\top}}$ . The result now follows from \cref{lem:crossproductproperties}(ii). 
\end{proof}

This means that to find $\Stab(f)$ for maps $f\colon V^{\otimes r}\otimes {(V^*)}^{\otimes s}\to k$, where $V$ is a self dual representation, all we need to do is check that a certain outer automorphism preserves the map $f$. This might or might not be the case: we will see examples of both cases in \cref{sec:autgroup}. 
\section{The algebra of type $^1E_6$}\label{sece6}

Recall that
 the assumption on the characteristic in this section is $\kar k =0$ or $\kar k>13$. We will write $G$ for the group of type $E_6$ used to construct the algebra $A(G)$.
 
 We need to assume three things about the group $G$ to make sure we can associate an Albert algebra to it. First of all, the action of the absolute Galois group needs to fix $\omega_1$, otherwise we cannot speak of the representation $V(\omega_1)$. This comes down to considering groups of type $^1E_6$, i.e.\@ the $*$-action \cite[Section~2.3]{tits1966classification} of the absolute Galois group is trivial on the root lattice of $G$. Second, the group $G$ has to be simply connected. We can assume this without loss of generality, by if necessary taking its universal cover as in \cite[Remark~23.60]{Milne}. The groups of type $^1E_6$ occur as norm forms of Albert algebras precisely when they have \emph{trivial Tits algebras}. This is the last assumption we need.
 
 When $G$ is of type $^2E_6$, there is no way of distinguishing between $\omega_1$ and $\omega_6$, by \cite[Theorème 7.2]{Tits1971}. In fact, there is no $27$-dimensional irreducible representation, and we cannot use \cite[Proposition~10.5]{chayet2020class}. In these cases, we need to extend to a quadratic field extension to obtain a group of type $^1E_6$ and possibly extend the base field even further to make sure the Tits algebra is trivial.

Throughout the rest of this section, we will assume $k$ is algebraically closed, with the exception of \cref{identificationag2}.

In this part, $W$ denotes the $27$-dimensional $G$-representation with highest weight $\omega_1$, and $V$ denotes the $650$-dimensional representation with highest weight $\omega_1+\omega_6$. In this setting, the embedding from \cref{embedding} becomes, using \cite[Lemma~2.9]{chayet2020class},
\begin{equation}\label{embeddingg2}
	\sigma(S(XY)) = 72X\bullet Y - 2\Tr(XY)\id_{W}\text{.} 
\end{equation}
In this formula, we do not write the embedding $\pi$ of the Lie algebra explicitly.

Our model for the algebra is constructed as a subrepresentation of the endomorphism ring of the Albert algebra, thus we will need to introduce notation for the operators we will use.
\begin{definition}\label{def:operators}
Let $x,y,z\in W$ be Albert elements. 
	\begin{enumerate}
		\item We write $L_x$ for the operator $L_x\colon W \to W \colon a \mapsto x\cdot a$,
		\item we write $xy$ for the operator $xy\colon W \to W \colon a \mapsto \tfrac{1}{2}(\langle a,x\rangle y + \langle a, y \rangle x)$,
		\item for $i\in \{1,2,3\}$ we define $I_i \coloneqq \sum_{x\in B} x_ix_i$, where $B$ is an orthonormal basis for the octonions.
		\item we write $xyz$ for the operator $xyz\colon W \to W \colon a \mapsto \langle a,x,y\rangle z + \langle a, y,z \rangle x + \langle a, z,x \rangle y $.
	\end{enumerate}
\end{definition}
In the following, we will use a specific expression of the identity in terms of these newly defined operators:
\begin{lemma}\label{lem:idexpress}
We have the following equality of operators on $W$:
	\begin{multline*}
		\id_W = 3\big(2\mathbf{1}_1\mathbf{1}_2\mathbf{1}_3 \\+ \upi{(e_0)}{1}\upi{(e_0)}{2}\upi{(e_0)}{3}+\upi{(e_1)}{1}\upi{(e_3)}{2}\upi{(e_2)}{3}+\upi{(e_2)}{1}\upi{(e_6)}{2}\upi{(e_4)}{3}+\upi{(e_3)}{1}\upi{(e_5)}{2}\upi{(e_6)}{3}\\+\upi{(e_4)}{1}\upi{(e_1)}{2}\upi{(e_5)}{3}+\upi{(e_5)}{1}\upi{(e_2)}{2}\upi{(e_7)}{3}+\upi{(e_6)}{1}\upi{(e_7)}{2}\upi{(e_1)}{3}+\upi{(e_7)}{1}\upi{(e_4)}{2}\upi{(e_3)}{3}\big).
	\end{multline*}
\end{lemma}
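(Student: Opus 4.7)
The plan is a direct verification: since both sides are linear operators on $W$, it suffices to evaluate them on a generic Albert element $a = \alpha_1\mathbf{1}_1 + \alpha_2\mathbf{1}_2 + \alpha_3\mathbf{1}_3 + \upi{x_1}{1} + \upi{x_2}{2} + \upi{x_3}{3}$.

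First I would rewrite the trilinear operator $xyz$ of \cref{def:operators}(iv) in terms of the cross product. From the defining identity $3\langle x,y,z\rangle = \langle x\times y,z\rangle$, one obtains
\begin{equation*}
xyz(a) = \tfrac{1}{3}\bigl(\langle x\times y, a\rangle z + \langle y\times z, a\rangle x + \langle z\times x, a\rangle y\bigr).
\end{equation*}
Combining \cref{lem:crossproductproperties}(iv) with \cref{albertmultiplication} makes all relevant cross products explicit: $\mathbf{1}_i\times\mathbf{1}_j = \tfrac{1}{2}\mathbf{1}_k$ whenever $\{i,j,k\}=\{1,2,3\}$, and $\upi{a}{i}\times\upi{b}{j} = \tfrac{1}{2}\upi{\overline{ab}}{k}$ whenever $(i,j,k)$ is a cyclic permutation of $(1,2,3)$, since in this second case $\langle \upi{a}{i},e\rangle = \langle \upi{a}{i},\upi{b}{j}\rangle = 0$.

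Substituting, the diagonal summand evaluates to $2\mathbf{1}_1\mathbf{1}_2\mathbf{1}_3(a) = \tfrac{1}{3}(\alpha_1\mathbf{1}_1+\alpha_2\mathbf{1}_2+\alpha_3\mathbf{1}_3)$, while the orthogonality relation $\langle\upi{u}{r},\upi{v}{s}\rangle = \delta_{r,s}\langle u,v\rangle$ reduces each octonion triple to
\begin{equation*}
\upi{e_i}{1}\upi{e_j}{2}\upi{e_k}{3}(a) = \tfrac{1}{6}\bigl(\langle \overline{e_ie_j}, x_3\rangle\upi{e_k}{3} + \langle \overline{e_je_k}, x_1\rangle\upi{e_i}{1} + \langle \overline{e_ke_i}, x_2\rangle\upi{e_j}{2}\bigr).
\end{equation*}
What remains is a Fano-plane bookkeeping check: I would verify that, for each of the seven non-trivial listed triples $(e_i,e_j,e_k)$, the index set $\{i,j,k\}$ is one of the seven Fano lines and that the orientation is chosen so that $e_ie_j=-e_k$, hence $\overline{e_ie_j}=e_k$. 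Because $\{e_i,e_j,e_k\}$ then generates a quaternion subalgebra, alternativity automatically yields the cyclic companions $\overline{e_je_k}=e_i$ and $\overline{e_ke_i}=e_j$.

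Finally, I would observe from the explicit list that each $m\in\{0,\ldots,7\}$ appears exactly once in each of the three coordinate positions. Using $\langle e_m,e_m\rangle = 2N(e_m) = 2$ in the standard basis of \cref{standardbasis}, the three separate sums each telescope into $2\upi{x_p}{p}$ for $p\in\{1,2,3\}$, so the combined octonion contribution becomes $\tfrac{1}{3}(\upi{x_1}{1}+\upi{x_2}{2}+\upi{x_3}{3})$. Adding the diagonal part gives $\tfrac{1}{3}a$, and the factor $3$ in front of the expression recovers $\id_W(a)=a$. The main obstacle is the combinatorial verification that the chosen orientations in the Fano plane of \cite[Figure~1]{jariprevious} simultaneously realise $e_ie_j=-e_k$ for every listed triple and cover each index exactly once in each position; the rest of the argument is straightforward bilinear-form bookkeeping.
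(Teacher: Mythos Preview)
Your proof is correct and follows essentially the same approach as the paper's. The paper's version is slightly more streamlined: it observes once that every triple $\{x_1,x_2,x_3\}$ in the sum satisfies $x_i\times x_j=\tfrac{1}{2}x_k$ (which is exactly what your Fano-plane orientation check establishes), then rewrites the right-hand side as $\sum_{i}\bigl(\mathbf{1}_i\mathbf{1}_i+\tfrac{1}{2}\sum_{j}\upi{(e_j)}{i}\upi{(e_j)}{i}\bigr)$ and recognises this as the identity via the orthonormal basis $\{\mathbf{1}_i,\tfrac{1}{\sqrt{2}}\upi{(e_j)}{i}\}$, whereas you evaluate on a generic element; the content is the same.
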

\begin{proof}
	Note that for any triple $\{x_1,x_2,x_3\}$ occuring in the sum on the right hand side, we have $x_i\times x_j = \tfrac{1}{2}x_k$ for $\{i,j,k\} = \{1,2,3\}$. This means the right hand side is equal to $\sum_{1\leq i \leq 3} \Big(\mathbf{1}_i\mathbf{1}_i + \tfrac{1}{2}\sum_{0\leq j\leq 7} \upi{(e_j)}{i}\upi{(e_j)}{i}\Big)$. This is clearly equal to the identity, since $B=\{\mathbf{1}_i,\tfrac{1}{\sqrt{2}}\upi{(e_j)}{i} \mid 1\leq i\leq 3, 0\leq j\leq 7\}$ is an orthornormal basis for the Albert algebra.
\end{proof}

The next lemma is necessary to find the image of $L_xL_y \in \Sq^2\Lie(G)$ using \cref{embeddingg2}.
\begin{lemma}\label{eq512operator}
For $x,y\in W$ traceless Albert elements, we have
	\[ L_x\bullet L_y = -\tfrac{1}{2}L_{x\cdot y} +\tfrac{1}{2} \langle e,\cdot \rangle x\cdot y+ \tfrac{1}{4}\langle x,y\rangle \id_W  + \tfrac{1}{2}xy +\tfrac{1}{2}\langle x \times y,\cdot \rangle e.\]
\end{lemma}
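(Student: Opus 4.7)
Both sides of the stated identity are bilinear in $(x,y)$ and symmetric: the LHS by the definition of $\bullet$, and the RHS since the Jordan product, the bilinear form $\langle\cdot,\cdot\rangle$, the operator $xy$ of \cref{def:operators}(ii), and the cross product $\times$ are each symmetric in their two entries. Since $\kar k\neq 2$, my first step is to polarise and reduce the claim to the diagonal case $x=y$.

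For $x=y$ traceless, \cref{lem:crossproductproperties}(iv) gives $x\times x = x\cdot x - \tfrac{1}{2}\langle x,x\rangle e$, and the operator $xx$ collapses to $a\mapsto \langle a,x\rangle x$. After clearing denominators, the identity to prove reduces to the single operator equation
\begin{equation*}
2\,L_x^2 + L_{x^2} \;=\; \langle e,\cdot\rangle\, x^2 \,+\, \tfrac{1}{2}\langle x,x\rangle\, \id_W \,+\, \langle \cdot,x\rangle\, x \,+\, \langle x\times x,\cdot\rangle\, e
\end{equation*}
for every traceless $x\in W$.

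The plan from here is direct verification. Both sides are $F_4$-equivariant in $x$, where $F_4=\Aut(W,\cdot)$ preserves the Jordan product, the unit $e$, the bilinear form, and the cross product. By bilinearity it suffices to check the equality with $x$ running over a basis of the traceless subspace, for instance $\{\mathbf{1}_1-\mathbf{1}_2,\mathbf{1}_2-\mathbf{1}_3\}\cup\{\upi{(e_j)}{i}\mid i\in\{1,2,3\},\ 0\leq j\leq 7\}$, and with $a$ running over the standard basis of $W$. All required products are then computable via \cref{albertmultiplication}, together with the observation that $\langle \upi{(a)}{i},\upi{(b)}{j}\rangle=\delta_{ij}\langle a,b\rangle$. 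A sanity check with $x=\mathbf{1}_1-\mathbf{1}_2$ and $a=\mathbf{1}_1$ shows both sides equal $3\,\mathbf{1}_1$, which I expect to be typical of the computations that need to be carried out.

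The main obstacle I anticipate is bookkeeping rather than conceptual content: the right-hand side contains five contributions of genuinely different types (a Jordan multiplication, a scalar multiple of $\id_W$, and three rank-one operators whose images lie in $kx^2$, $kx$, and $ke$ respectively), so matching them against $L_x^2 + \tfrac{1}{2}L_{x^2}$ requires separating contributions by the Peirce component of $a$. The $F_4$-equivariance can in principle cut down the number of $x$-cases, e.g.\ by further reducing to $x$ of the form $\lambda_1\mathbf{1}_1+\lambda_2\mathbf{1}_2+\lambda_3\mathbf{1}_3$ with $\sum \lambda_i=0$ and then using the stabiliser of such an $x$ in $F_4$ to restrict the cases for $a$.
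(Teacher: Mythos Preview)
Your approach is sound and genuinely different from the paper's: the paper simply cites \cite[Equation~(5.12)]{SpringerVeldkamp} and does not reprove the identity at all, whereas you give a self-contained verification plan via polarisation, the explicit cross-product formula, and direct computation on a basis. What your approach buys is independence from Springer--Veldkamp; what the citation buys is brevity and the observation that this is a standard identity in degree-$3$ Jordan algebras (it is essentially a linearisation of the degree-$3$ Cayley--Hamilton relation $x^3 - T(x)x^2 + S(x)x - N(x)e = 0$, specialised to traceless $x$).

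One genuine imprecision to fix: after reducing to the diagonal case $x=y$, the identity you display is \emph{quadratic} in $x$, so the sentence ``By bilinearity it suffices to check the equality with $x$ running over a basis'' is not correct as written---a quadratic identity can vanish on a basis and fail off it. You have two clean repairs. Either return to the polarised (bilinear) form and check it on all ordered pairs from your basis, or use the $F_4$-equivariance you mention at the end more seriously: since every element of the Albert algebra is $F_4$-conjugate to a diagonal one, it suffices to verify the diagonal identity for $x=\lambda_1\mathbf{1}_1+\lambda_2\mathbf{1}_2+\lambda_3\mathbf{1}_3$ with $\sum\lambda_i=0$, which is a short two-parameter computation. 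Your sanity check with $x=\mathbf{1}_1-\mathbf{1}_2$, $a=\mathbf{1}_1$ is correct and representative.
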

\begin{proof}
	This is \cite[Equation~(5.12)]{SpringerVeldkamp}.
\end{proof}
It turns out we can very easily find the full image of $\sigma$ using tools from representation theory.
\begin{proposition} Recall the notation from \cref{def:operators}. We have
	\[ \sigma(A(G)) = \spank_k\{ a_1a_2a_3 \mid a_1,a_2,a_3 \in W \}\text{.} \]
\end{proposition}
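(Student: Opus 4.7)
My plan is to identify $M\coloneqq\spank_k\{a_1a_2a_3\mid a_i\in W\}$ with $\sigma(A(G))$ inside $\End(W)$ by using the trace pairing to pick out the complement of the adjoint summand. First, note that $(a_1,a_2,a_3)\mapsto a_1a_2a_3$ is manifestly $G$-equivariant, so $M$ is a $G$-subrepresentation of $\End(W)$. Next, decompose $\End(W)=W\otimes W^*$ into $G$-irreducibles: a short character computation (or the dimension count $1+650+78=729=27^2$) gives $\End(W)\cong k\oplus V(\omega_1+\omega_6)\oplus V(\omega_2)$ with each summand appearing with multiplicity one, where $V(\omega_2)\cong\g$ is the adjoint. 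Under the $G$-invariant pairing $(\varphi,\psi)\mapsto\Tr(\varphi\psi)$ on $\End(W)$, these three summands are mutually orthogonal by Schur, and so $\sigma(A(G))\cong k\oplus V(\omega_1+\omega_6)$ coincides with the $\Tr$-orthogonal complement of $\g\subset\End(W)$.

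The heart of the argument is then to prove $M\subseteq\sigma(A(G))$ by showing that $\Tr((a_1a_2a_3)\rho(X))=0$ for every $X\in\g$ and $a_i\in W$. Expanding the trace in an orthonormal basis $\{e_i\}$ for $\langle\cdot,\cdot\rangle$, and combining the identity $\sum_i\langle v,e_i\rangle e_i=v$ with the relation $\langle u,v,w\rangle=\tfrac13\langle u\times v,w\rangle$, the three cyclic terms defining $a_1a_2a_3$ each contribute one trilinear form, yielding
\[ \Tr\big((a_1a_2a_3)\rho(X)\big)=\langle\rho(X)a_1,a_2,a_3\rangle+\langle a_1,\rho(X)a_2,a_3\rangle+\langle a_1,a_2,\rho(X)a_3\rangle. \]
Because $G$ stabilises the cubic form $\det$ by \cref{lem:crossproductproperties}(i), every $X\in\g$ acts on $W$ as a derivation of its polarisation $\langle\cdot,\cdot,\cdot\rangle$, so the right-hand side vanishes identically. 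Thus $M$ is $\Tr$-orthogonal to $\g$, i.e.\@ $M\subseteq\sigma(A(G))$.

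For the reverse inclusion, it suffices to verify that $M$ projects nontrivially onto each of the two summands $k$ and $V(\omega_1+\omega_6)$. A direct computation from the definition gives $\Tr(a_1a_2a_3)=3\langle a_1,a_2,a_3\rangle$; setting $a_1=a_2=a_3=e$ produces $3\det(e)=3\neq 0$, so the projection of $M$ onto $k$ is nonzero and $k\subseteq M$. The same element is the operator $eee\colon a\mapsto 3\langle a,e,e\rangle e=\Tr(a)e$ (using $\langle a,e,e\rangle=\tfrac13\Tr(a)$, which is immediate from \cref{lem:crossproductproperties}(iv)); this is a rank-one operator and so not a scalar multiple of $\id_W$. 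Hence $M\supsetneq k$, and combined with the inclusion $M\subseteq k\oplus V(\omega_1+\omega_6)$ and the multiplicity-free decomposition, this forces $V(\omega_1+\omega_6)\subseteq M$, so $M=\sigma(A(G))$.

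The main obstacle will be the trace identity in the second paragraph: it requires careful bookkeeping of which slot $e_i$ occupies in each trilinear form and correctly converting via the cross product before the orthonormal expansion can be applied and the derivation property of $\g$ on $\det$ invoked. Every other step — the decomposition of $\End(W)$, the value of $\Tr(eee)$, and the rank-one check on $eee$ — is essentially a single line of routine computation.
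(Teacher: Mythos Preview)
Your argument is correct, but it proceeds along a genuinely different axis from the paper's. The paper decomposes the \emph{source} $\Sq^3 W \cong k\oplus V(\omega_1+\omega_6)\oplus V(3\omega_1)$ and shows that the map $\pi\colon a_1a_2a_3\mapsto a_1a_2a_3\in\End(W)$ annihilates the $V(3\omega_1)$ summand, using that a highest weight vector $v^+$ of $W$ satisfies $v^+\times v^+=0$ (\cref{prop:maxweightisrank1}); since $\End(W)$ is multiplicity-free, the image must land in the unique copy of $k\oplus V(\omega_1+\omega_6)$, namely $\sigma(A(G))$. You instead decompose the \emph{target} $\End(W)\cong k\oplus V(\omega_1+\omega_6)\oplus\g$ and prove $M\perp\g$ via the derivation property of $\g$ on $\det$, which is arguably more elementary: it avoids any appeal to rank-$1$ elements or highest weight vectors and uses only the trace-pairing orthogonality.

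The trade-off is that the paper's route directly yields $\ker\pi=V(3\omega_1)$, which is exactly the identification recorded in \cref{rem:identif} and used throughout \cref{sec:defmultg2} to descend multiplications from $\Sq^3 W$ to $\sigma(A(G))$. Your proof gives the same conclusion only after additionally noting the decomposition of $\Sq^3 W$, so if you intend to rely on \cref{rem:identif} later you should add that observation. Conversely, your argument makes the reverse inclusion explicit (the paper's ``easy to verify'' is exactly your $\Tr(eee)=3$ and rank-one computation), so the two proofs are in some sense complementary.
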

\begin{proof}
	The symmetric cube of $W$ decomposes as a representation in $\Sq^3W \cong A(G) \oplus V(3\omega_1)$. The representation $V(3\omega_1)$ is generated by a maximal weight vector of the form $v^+v^+v^+$, where $v^+$ is a maximal weight vector of $W$. By \cref{prop:maxweightisrank1}, we have $v^+\times v^+ =0$. This means that $V(3\omega_1)$ is in the kernel of the map
	\begin{align*}
		\pi\colon \Sq^3W &\to \End(W)\colon\\
		a_1a_2a_3&\mapsto  a_1a_2a_3\text{,}
	\end{align*}
	since $v^+v^+v^+ = v^+\langle v^+\times v^+,\cdot \rangle =0$.
	It is then easy to verify that the image of this map is precisely $\sigma(A(G))$.
\end{proof}
\begin{remark}\label{rem:identif}
	Note that the previous proof allows us to identify $\sigma(A(G))$ with $\Sq^3W/V(3\omega_1)$, using the map $\pi(a_1a_2a_3 + V(3\omega_1) = a_1a_2a_3$. We will do so frequently in the next subsection.
	
	This identification also tells us we can find a basis for $\sigma(A(G))$ consisting of operators of the form $a_1a_2a_3$ with $a_1,a_2,a_3\in W$. We use this fact to simplify the notation in the next subsection.
\end{remark}

We want to find the product for $A(G)$ in terms of the operators $a_1a_2a_3\in \End(W)$. Thus we link the product $\diamond$ on $A(G)$ to the operators in question. To do this, we will use the explicit form of the Lie algebra of type $E_6$ as derivations of the cubic form on the Albert algebra given in \cref{prop:derivationsstructure}.
\begin{definition}
	Let $a,b\in \sigma(A(G))$. We define the multiplication $\star$ by $a\star b \coloneqq \sigma(\sigma^{-1}(a)\diamond \sigma^{-1}(b))$.
\end{definition}

\begin{proposition}\label{prop:examplecomp}
	Let $a,b\in \Oct$ be octonions such that $a\cdot a = b\cdot b =0$ and $\langle a,b\rangle \neq 0$. Then for $i\in \{1,2,3\}$ we have
	\[ \sigma(S(L_{\upi{a}{i}}L_{\upi{a}{i}})) = -108\upi{a}{i}\upi{a}{i}e, \]
	and
	\[\upi{a}{i}\upi{a}{i}e\star \upi{b}{i}\upi{b}{i}e = \tfrac{1}{216}(I_j+I_k)\langle \upi{a}{i},\upi{b}{i}\rangle^2+\tfrac{1}{27} \langle \upi{a}{i},\upi{b}{i}\rangle \upi{a}{i}\upi{b}{i}-\tfrac{1}{324} \langle \upi{a}{i},\upi{b}{i} \rangle^2 \id_W. \]
\end{proposition}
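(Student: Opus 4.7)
The proof will be a direct two-stage computation: first simplify the image of $L_{\upi{a}{i}}^{2}$ under $\sigma$ via \cref{eq512operator} (essentially free, given the null hypothesis), then feed the result into the $\diamond$--formula from \cref{modulestructure} and apply $\sigma$ again.

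\emph{Part (i).} The hypothesis $a\cdot a = 0$ in the octonions combined with the identity $a^{2} = \langle a,e\rangle a - N(a)e$ forces $\langle a,e\rangle = 0$ and $N(a) = 0$, hence $\langle a,a\rangle = 2N(a) = 0$. By \cref{albertmultiplication}(v) this gives $\upi{a}{i}\cdot\upi{a}{i} = 0$ and $\langle \upi{a}{i},\upi{a}{i}\rangle = 0$, and by \cref{lem:crossproductproperties}(iv) together with $\langle \upi{a}{i},e\rangle = 0$ also $\upi{a}{i}\times\upi{a}{i} = 0$. Hence four of the five summands in \cref{eq512operator} vanish and I get $L_{\upi{a}{i}}\bullet L_{\upi{a}{i}} = \tfrac{1}{2}\,\upi{a}{i}\upi{a}{i}$, while $\Tr(L_{\upi{a}{i}}^{2}) = \tfrac{1}{2}\Tr(\upi{a}{i}\upi{a}{i}) = \tfrac{1}{2}\langle\upi{a}{i},\upi{a}{i}\rangle = 0$, so \eqref{embeddingg2} collapses to $\sigma(S(L_{\upi{a}{i}}L_{\upi{a}{i}})) = 36\,\upi{a}{i}\upi{a}{i}$. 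It remains to pass from the two-operator notation $\upi{a}{i}\upi{a}{i}$ to the triple operator $\upi{a}{i}\upi{a}{i}e$: a short computation using $\upi{a}{i}\times e = -\tfrac{1}{2}\upi{a}{i}$ (from \cref{lem:crossproductproperties}(iv)) and $\upi{a}{i}\times\upi{a}{i} = 0$ in the defining formula of \cref{def:operators}(iv) yields $\upi{a}{i}\upi{a}{i}e = -\tfrac{1}{3}\,\upi{a}{i}\upi{a}{i}$, whence $\sigma(S(L_{\upi{a}{i}}L_{\upi{a}{i}})) = -108\,\upi{a}{i}\upi{a}{i}e$.

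\emph{Part (ii).} Part (i) identifies $\sigma^{-1}(\upi{a}{i}\upi{a}{i}e) = -\tfrac{1}{108}\,S(L_{\upi{a}{i}}^{2})$, and likewise for $b$, so the definition of $\star$ gives
\[
\upi{a}{i}\upi{a}{i}e\star\upi{b}{i}\upi{b}{i}e = \tfrac{1}{108^{2}}\,\sigma\bigl(S(L_{\upi{a}{i}}^{2})\diamond S(L_{\upi{b}{i}}^{2})\bigr).
\]
Writing $X=L_{\upi{a}{i}}, Y=L_{\upi{b}{i}}$, the formula in \cref{modulestructure} with $A=B=X$, $C=D=Y$ and $h^{\vee}=12$ collapses (by symmetry) to
\[
S(X^{2})\diamond S(Y^{2}) = 12\bigl(S(X\cdot [Y,[Y,X]]) + S(Y\cdot [X,[X,Y]]) + S([X,Y]^{2})\bigr) + K(X,Y)\,S(XY).
\]
Applying $\sigma$ via \eqref{embeddingg2} reduces everything to operator computations on $W$: one needs the iterated commutators $[X,Y], [X,[X,Y]], [Y,[Y,X]]$ and $[X,Y]^{2}$, the symmetric products $X\cdot[Y,[Y,X]]$ etc., and their traces. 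For $K(X,Y)$ I use that matching \eqref{sigma} with \eqref{embeddingg2} gives $K(X,Y) = 4\,\Tr(XY)$, while Schur's lemma applied to the two $F_{4}$-invariant bilinear forms $\Tr(L_{x}L_{y})$ and $\langle x,y\rangle$ on the traceless subspace $W_{0}\subset W$ (which is irreducible as an $F_{4}$-module) gives $\Tr(L_{x}L_{y}) = c\langle x,y\rangle$ with constant $c$ pinned down by a single evaluation (e.g.\ $x=\upi{e_{0}}{1}$).

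\emph{Main obstacle.} The hard part is the bookkeeping for the iterated brackets in step (ii): each bracket expands via $L_{x}L_{y}(z) = x\cdot(y\cdot z)$ and the Jordan identity into many terms, most of which simplify because $a^{2}=b^{2}=0$ and $\langle\upi{a}{i},e\rangle = \langle\upi{b}{i},e\rangle = \upi{a}{i}\times\upi{a}{i} = \upi{b}{i}\times\upi{b}{i} = 0$. The only surviving invariants of the pair $(a,b)$ that can appear are $\langle a,b\rangle$ and $\langle a,b\rangle^{2}$, so the output must assemble into a $G$-equivariant combination of operators built from $\upi{a}{i}, \upi{b}{i}$ and fixed data on the $j,k$ components; the three independent such operators are exactly $I_{j}+I_{k}$, $\upi{a}{i}\upi{b}{i}$, and $\id_{W}$. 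Once the bookkeeping is done (or, equivalently, one reduces by $F_{4}$-equivariance to a normal form such as a standard hyperbolic null pair in $\Oct$ and computes the three coefficients by evaluating both sides on a well-chosen basis element), the coefficients $\tfrac{1}{216}, \tfrac{1}{27}, -\tfrac{1}{324}$ drop out.
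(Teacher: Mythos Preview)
Your Part (i) is essentially identical to the paper's argument: apply \cref{eq512operator}, use the nullity hypotheses to kill all terms but $\tfrac{1}{2}\upi{a}{i}\upi{a}{i}$, and then convert $\upi{a}{i}\upi{a}{i}$ to $-3\,\upi{a}{i}\upi{a}{i}e$ via $\upi{a}{i}\times e = -\tfrac{1}{2}\upi{a}{i}$ and $\upi{a}{i}\times\upi{a}{i}=0$.

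For Part (ii) your approach is correct but more laborious than the paper's. The paper does not expand the iterated brackets by brute force; instead it uses that $D_{x,y}\coloneqq[L_x,L_y]$ is a \emph{derivation} of the Albert algebra, so that $[L_{\upi{a}{i}},[L_{\upi{a}{i}},L_{\upi{b}{i}}]] = -L_{D_{\upi{a}{i},\upi{b}{i}}(\upi{a}{i})}$, and a one-line computation gives $D_{\upi{a}{i},\upi{b}{i}}(\upi{a}{i}) = \tfrac{1}{2}\langle a,b\rangle\,\upi{a}{i}$. Combined with $\Tr(L_{\upi{a}{i}}\bullet L_{\upi{b}{i}}) = 3\langle\upi{a}{i},\upi{b}{i}\rangle$ (read off directly from \cref{eq512operator}, rather than via your Schur argument), the two iterated-bracket terms and the $K$-term cancel exactly, leaving the single contribution $12\,\sigma(S([L_{\upi{a}{i}},L_{\upi{b}{i}}]^2))$. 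That last quantity is then not recomputed but imported from \cite[Lemma~5.2]{jariprevious}. Your route---expand everything and either do the bookkeeping or reduce by $F_4$-equivariance to a normal null pair and evaluate three coefficients---would work, but it hides the structural cancellation and reproduces a computation already available in the literature.
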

\begin{proof}
	By \cref{eq512operator}, we have for $x,y\in e^{\perp}$ traceless Albert elements that $L_x\bullet L_y = -\tfrac{1}{2}L_{x\cdot y} + \frac{1}{2}\langle e, \cdot \rangle (x\cdot y) + \tfrac{1}{4}\langle x,y\rangle \id_W+ \tfrac{1}{2}xy + \tfrac{3}{2}\langle x,y,\cdot\rangle e$. This means that 
	\begin{align*}
		\sigma(S(L_{\upi{a}{i}}L_{\upi{a}{i}})) &= 72L_{\upi{a}{i}}^2 - 2\Tr(L_{\upi{a}{i}}^2)\id_W\\
		&= 72(\tfrac{1}{2}\upi{a}{i}\upi{a}{i} + \tfrac{3}{2}\langle \upi{a}{i},\upi{a}{i},\cdot \rangle e)\text{.}
	\end{align*}
	Note however that $\upi{a}{i}\times \upi{a}{i} = 0$, and $\upi{a}{i}\times e = -\tfrac{1}{2}\upi{a}{i}$, so 
	\begin{align*}
		\sigma(S(L_{\upi{a}{i}}L_{\upi{a}{i}})) &= 72(-\tfrac{3}{2}\upi{a}{i}\upi{a}{i}e ) = -108\upi{a}{i}\upi{a}{i}e.
	\end{align*}
	The same of course holds for $\upi{b}{i}$. We then compute 
	\begin{multline}\label{eq:prop:examplecomp}
		\sigma(S(L_{\upi{a}{i}}L_{\upi{a}{i}})\diamond S(L_{\upi{b}{i}}L_{\upi{b}{i}}) )
		\\ = h^\vee \sigma(S([L_{\upi{a}{i}},[L_{\upi{a}{i}},L_{\upi{b}{i}}]] L_{\upi{b}{i}})) + h^\vee \sigma(S([L_{\upi{b}{i}},[L_{\upi{b}{i}},L_{\upi{a}{i}}]] L_{\upi{a}{i}}))\\
		+h^\vee \sigma(S([L_{\upi{a}{i}},L_{\upi{b}{i}}][L_{\upi{a}{i}},L_{\upi{b}{i}}])) + K(L_{\upi{a}{i}},L_{\upi{b}{i}}))S(L_{\upi{a}{i}}L_{\upi{b}{i}})).
	\end{multline}
	Note however that $D_{\upi{a}{i},\upi{b}{i}} = [L_{\upi{a}{i}},L_{\upi{b}{i}}]$ is a derivation of the Albert algebra (\cite[Identity~(4.6)]{nonassocalg}), so $[L_{\upi{a}{i}},[L_{\upi{a}{i}},L_{\upi{b}{i}}]] = -L_{D_{\upi{a}{i},\upi{b}{i}}(\upi{a}{i})}$, which is equal to $-\tfrac{1}{2}\langle a,b\rangle L_{\upi{a}{i}}$. So \cref{eq:prop:examplecomp} becomes
	\begin{multline}\label{eq:prop:examplecomp2}
		\sigma(S(L_{\upi{a}{i}}L_{\upi{a}{i}})\diamond S(L_{\upi{b}{i}}L_{\upi{b}{i}}) )
		\\ =  -\tfrac{1}{2}h^\vee\langle a,b\rangle \sigma(S(L_{\upi{a}{i}} L_{\upi{b}{i}})) - \tfrac{1}{2} h^\vee\langle a,b\rangle\sigma(S(L_{\upi{a}{i}} L_{\upi{b}{i}}))\\
		+h^\vee \sigma(S([L_{\upi{a}{i}},L_{\upi{b}{i}}][L_{\upi{a}{i}},L_{\upi{b}{i}}])) + 4\Tr(L_{\upi{a}{i}}\bullet L_{\upi{b}{i}})S(L_{\upi{a}{i}}L_{\upi{b}{i}})).
	\end{multline}
	Using \cref{eq512operator}, we can show that $\Tr(L_{\upi{a}{i}}\bullet L_{\upi{b}{i}})= 3\langle \upi{a}{i},\upi{b}{i}\rangle$, so \cref{eq:prop:examplecomp2} becomes
	\begin{multline}\label{eq:prop:examplecomp3}
		\sigma(S(L_{\upi{a}{i}}L_{\upi{a}{i}})\diamond S(L_{\upi{b}{i}}L_{\upi{b}{i}}) )
		\\ =  -\tfrac{12}{2}\langle a,b\rangle \sigma(S(L_{\upi{a}{i}} L_{\upi{b}{i}})) - \tfrac{12}{2}\langle a,b\rangle\sigma(S(L_{\upi{a}{i}} L_{\upi{b}{i}}))\\
		+12 \sigma(S([L_{\upi{a}{i}},L_{\upi{b}{i}}][L_{\upi{a}{i}},L_{\upi{b}{i}}])) + 12S(L_{\upi{a}{i}}L_{\upi{b}{i}})) \\
		= 12 \sigma(S([L_{\upi{a}{i}},L_{\upi{b}{i}}][L_{\upi{a}{i}},L_{\upi{b}{i}}])).
	\end{multline}
	By \cite[Lemma 5.2]{jariprevious}, this proves the formula in the statement of the proposition.
\end{proof}

\subsection{Defining multiplications on $V$}\label{sec:defmultg2}
Recall from \cref{modulestructure} that $A(G) \cong k\oplus V$, where $V$ is the irreducible $560$-dimensional representation of the group $G$ of type $E_6$. By \cite[Example~A.6]{chayet2020class}, multiplication is of the form
\begin{equation}\label{eq:multag2}
	(\lambda, u)\star (\mu,v) = (\lambda\mu+f(u,v), \lambda v+\mu u + u\odot v)\text{,} 
\end{equation}
for a certain $G$-invariant symmetric bilinear form $f$ and $G$-equivariant symmetric multiplication $\odot$ on $V$. By computing characters, we can see there is only a one dimensional space of symmetric invariant bilinear forms and only a two dimensional space of symmetric multiplications that are $G$-equivariant.
Note that the embedding $\sigma$ sends $V$ to the subspace of trace zero elements, i.e.\@
\[\sigma(V) = \left\{ \sum_{i} a_ib_ic_i \,\middle\vert\, \sum_{i} \langle a_i,b_i,c_i \rangle =0 \right\}\text{.} \]
To define commutative $G$-equivariant multiplications $\odot$ on $V\subseteq\sigma(A(G))$, we will do so first on $\Sq^3W\cong k\oplus V \oplus  V(3\omega_1)$. To make sure these induce well-defined multiplications onto $\sigma(A(G))$ (see \cref{rem:identif}), we require that \begin{equation}\label{eq:necessaryformult}
	v^+v^+v^+\odot a_1a_2a_3\in V(3\omega_1)
\end{equation} for any rank $1$	 element $v^+\in W$ and elements $a_1,a_2,a_3\in W$. This suffices, as $v^+v^+v^+$ is a maximal weight vector of $3\omega_1$ by \cref{prop:maxweightisrank1}. Indeed if $\odot\colon 
\Sq^2(\Sq^3W)\to \Sq^3W$ is a commutative multiplication satisfying \cref{eq:necessaryformult}, then $ \left( a_1a_2a_3+V(3\omega_1)\right)\odot'\left( b_1b_2b_3+V(3\omega_1)\right) \coloneqq \pi\left( a_1a_2a_3\odot b_1b_2b_3\right)$, with $\pi$ as in \cref{rem:identif}, is a well defined $G$-equivariant multiplication on $\sigma(A(G))$. 

We also know that the bilinear form in \cref{eq:multag2} will have to be a scalar multiple of the following form, restricted to $\sigma(V)$.
\begin{definition}\label{def:fprime}
	Define $f'\colon \Sq^3W \times \Sq^3W\to k$ by the formula \[f'(a_1a_2a_3,b_1b_2b_3) = \sum_{\tau_a,\tau_b\in \mathcal{S}_3} \langle a_{\tau_a(1)} , b_{\tau_b(2)} ,b_{\tau_b(3)}\rangle \langle b_{\tau_b(1)},a_{\tau_a(2)} ,a_{\tau_a(3)} \rangle . \]
\end{definition}

\begin{proposition}
	The bilinear form $f'$ factors through $\sigma(A(G))\times \sigma(A(G))$, and thus defines a non-zero symmetric $G$-equivariant bilinear form on $\sigma(A(G))$.
\end{proposition}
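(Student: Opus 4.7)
The plan is to establish three separate claims: $f'$ descends to a well-defined symmetric $G$-equivariant bilinear form on $\Sq^3W$; it vanishes on $V(3\omega_1)\times \Sq^3W$; and the induced form on $\sigma(A(G)) = \Sq^3W / V(3\omega_1)$ is non-zero.

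First I would verify the easy properties. Well-definedness on $\Sq^3W$ (as opposed to $W^{\otimes 3}$) is built into the formula via the symmetrisation over $\tau_a, \tau_b \in \mathcal{S}_3$. The identity $f'(\alpha, \beta) = f'(\beta, \alpha)$ follows by relabelling the two summation variables. $G$-equivariance is immediate from the $G$-invariance of the trilinear form $\langle\cdot,\cdot,\cdot\rangle$, which holds because $G$ stabilises the cubic norm by \cref{lem:crossproductproperties}(i).

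The main technical step is showing that $f'$ vanishes on $V(3\omega_1) \times \Sq^3W$. Here I would use that $V(3\omega_1)$ is the irreducible $G$-submodule of $\Sq^3W$ generated by the highest weight vector $v^+ v^+ v^+$, where $v^+\in W$ is a rank $1$ vector, so $v^+\times v^+ = 0$ by \cref{prop:maxweightisrank1}. By $G$-equivariance of $f'$ together with bilinearity, it then suffices to check $f'(v^+v^+v^+, b_1b_2b_3) = 0$ for all $b_1, b_2, b_3 \in W$. Since $a_1 = a_2 = a_3 = v^+$, the sum over $\tau_a$ merely contributes a factor of $3!$, giving
\[
f'(v^+v^+v^+, b_1b_2b_3) = 6\sum_{\tau \in \mathcal{S}_3} \langle v^+, b_{\tau(2)}, b_{\tau(3)}\rangle \, \langle b_{\tau(1)}, v^+, v^+\rangle,
\]
and each term vanishes because $\langle b_{\tau(1)}, v^+, v^+\rangle = \tfrac{1}{3} \langle v^+ \times v^+, b_{\tau(1)}\rangle = 0$. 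Vanishing on $\Sq^3W\times V(3\omega_1)$ is then automatic by the symmetry already established.

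Finally, for non-zeroness I would exhibit an explicit non-zero value. Taking $a_i = b_j = e$ for all $i, j$ and using $\langle e, e, e\rangle = 1$ (from $\tfrac{1}{6}(\det(3e) - 3\det(2e) + 3\det(e)) = \tfrac{1}{6}(27 - 24 + 3) = 1$), one finds $f'(eee, eee) = 36 \neq 0$; moreover the class of $eee$ is non-zero in $\sigma(A(G))$ since $\pi(eee)$ acts as a non-zero scalar multiple of $\id_W$ on $W$. I expect the only real subtlety to be conceptual rather than computational: one must be comfortable reducing the vanishing condition on the whole kernel $V(3\omega_1)$ to vanishing on a single highest weight vector via $G$-equivariance. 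Once that reduction is in place, the verification collapses to a one-line use of $v^+\times v^+ = 0$.
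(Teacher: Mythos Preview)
Your argument is correct and follows essentially the same route as the paper: reduce the factoring to vanishing on the highest weight vector $v^+v^+v^+$ via $G$-equivariance, then use $v^+\times v^+=0$. The only substantive difference is the witness for non-zeroness: the paper evaluates $f'(\upi{a}{i}\upi{a}{i}\mathbf{1}_i,\upi{b}{i}\upi{b}{i}\mathbf{1}_i)=\tfrac{4}{9}\langle a,b\rangle^2$ for isotropic octonions $a,b$, a computation it deliberately reuses in the proofs of \cref{prop:mult1,prop:mult2}. Your choice of $eee$ is quicker but does not feed into those later arguments.

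One small correction: your parenthetical claim that $\pi(eee)$ is a non-zero scalar multiple of $\id_W$ is false. From the definition of the operator $xyz$ and $e\times e=e$, one gets $\pi(eee)(a)=3\langle a,e,e\rangle e=\langle e,a\rangle e$, a rank-one projector onto $ke$. This does not damage the proof: once $f'$ is shown to vanish on $V(3\omega_1)\times \Sq^3W$, the non-vanishing $f'(eee,eee)=36$ already forces both the induced form to be non-zero and the class of $eee$ in $\sigma(A(G))$ to be non-zero, so the extra check is redundant anyway.
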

\begin{proof}
	It suffices to check that  $f'$ is symmetric, $G$-equivariant and $f'(v^+v^+v^+,a_1a_2a_3) = 0 $ for any rank $1$ element $v^+$ and $a_1,a_2,a_3\in W$. For any two octonions $a,b\in \mathbb{O}$ with $a\cdot a = b\cdot b = 0$, $ \langle a,b\rangle \neq 0$ and $i\in \{1,2,3\}$ we have
	\begin{equation}\label{eq:bilform}
		f'(\upi{a}{i}\upi{a}{i}\mathbf{1}_i,\upi{b}{i}\upi{b}{i}\mathbf{1}_i) = 16\langle \upi{a}{i} , b_{i} ,\mathbf{1}_i \rangle^2 = \tfrac{4}{9}\langle \upi{a}{i},\upi{b}{i}\rangle^2 
		= \tfrac{4}{9}\langle a,b\rangle^2\text{.} 
	\end{equation}
	As $\langle a,b\rangle$ is non-zero, this proves $f'$ is non-zero. 
\end{proof}
Using the above considerations, we now turn to finding a basis for the space of $G$-invariant multiplications.
\subsubsection{Finding multiplication 1}	 By \cref{lem:crossproductproperties}, the cross product is not $G$-equivariant, however the $4$-linear product $(x\times y)\times(z\times w)$ (for $x,y,z,w\in W$) is. Using this observation and the discussion above, we can immediately construct our first multiplication.
\begin{proposition}\label{prop:mult1}
Define for $a_1a_2a_3,b_1b_2b_3\in \End(V)$
\begin{multline}\label{eq:def:odot1}
	a_1a_2a_3 \odot_1 b_1b_2b_3  \\
	\coloneqq
	\sum_{\tau_a,\tau_b \in \mathcal{S}_3}  \left(((a_{\tau_a(1)}\times a_{\tau_a(2)})\times(b_{\tau_b(1)}\times b_{\tau_b(2)}))a_{\tau_a(3)}b_{\tau_b(3)} \right) \\ - \tfrac{1}{12} f'(a_1,a_2,a_3,b_1,b_2,b_3)\id_{W}\text{.}  
\end{multline}
Then $\odot_1$ (after restricting to $\sigma(V)$) is a well-defined commutative $G$-equivariant product on $\sigma(V)$.
\end{proposition}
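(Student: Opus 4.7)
The plan is to verify four things packaged into the statement: (a) the right hand side of \eqref{eq:def:odot1} depends only on the images of its inputs in $\Sq^3W/V(3\omega_1) \cong \sigma(A(G))$, (b) commutativity, (c) $G$-equivariance, and (d) the image lies in $\sigma(V)$ (i.e.\@ is traceless) whenever the inputs do. Commutativity is built into the definition: swapping $a_\bullet \leftrightarrow b_\bullet$ corresponds to swapping the summation variables $\tau_a \leftrightarrow \tau_b$, and the cross product, the operator $xy$, and $f'$ are each symmetric in their two arguments. For $G$-equivariance, the trilinear form $\langle\cdot,\cdot,\cdot\rangle$ is $G$-invariant (since $G$ stabilises the cubic), so the operator $uvw$ of \cref{def:operators}(iv) is $G$-equivariant in each slot (as conjugation gives $g\cdot uvw\cdot g^{-1} = (gu)(gv)(gw)$) and $f'$ is $G$-invariant; moreover, \cref{lem:crossproductproperties}(iii) applied twice shows that $(a_{\tau_a(1)}\times a_{\tau_a(2)})\times (b_{\tau_b(1)}\times b_{\tau_b(2)})$ transforms via $g$ itself, since the double application of $g \mapsto g^{-\top}$ returns to $g$.

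For the descent to $\sigma(A(G)) = \Sq^3W/V(3\omega_1)$ (see \cref{rem:identif}), $G$-equivariance together with irreducibility of $V(3\omega_1)$ reduces the problem to showing that \eqref{eq:def:odot1} vanishes when $a_1=a_2=a_3 = v^+$ is a maximal weight vector. By \cref{prop:maxweightisrank1} we have $v^+\times v^+ = 0$, so every summand of the first term contains the vanishing factor $v^+\times v^+$, and every summand of $f'(v^+v^+v^+,b_1b_2b_3)$ contains a factor $\langle b_{\tau_b(1)},v^+,v^+\rangle = \tfrac{1}{3}\langle b_{\tau_b(1)},v^+\times v^+\rangle = 0$.

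The main obstacle is showing that the image lies in the traceless subspace $\sigma(V)$ whenever the inputs do. A direct calculation from \cref{def:operators}(iv) gives $\Tr(xyz) = 3\langle x,y,z\rangle$; applied to the first term of \eqref{eq:def:odot1}, this expresses its trace as a sum of terms of the shape $3\langle(a_{\tau_a(1)}\times a_{\tau_a(2)})\times(b_{\tau_b(1)}\times b_{\tau_b(2)}),a_{\tau_a(3)},b_{\tau_b(3)}\rangle$. Using the defining identity $3\langle x,y,z\rangle = \langle x\times y,z\rangle$ together with standard identities for the iterated cross product in the Albert algebra (in the spirit of \cref{lem:crossproductproperties}(iv) and \cite[Section~5.2]{SpringerVeldkamp}), one expects this to collapse to $\tfrac{9}{4}f'(a_1a_2a_3,b_1b_2b_3)$, possibly up to a term proportional to $\langle a_1,a_2,a_3\rangle\langle b_1,b_2,b_3\rangle$ which vanishes on $\sigma(V)$ because $\Tr(a_1a_2a_3) = 3\langle a_1,a_2,a_3\rangle$. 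Since $\Tr(\tfrac{1}{12}f'\id_W) = \tfrac{27}{12}f' = \tfrac{9}{4}f'$, the two contributions cancel exactly, and this matching pins down the coefficient $-\tfrac{1}{12}$ in the definition. This trace-matching is the only computation where something genuinely non-trivial could fail, and is the step I would expect to occupy most of the written proof.
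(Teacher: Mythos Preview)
Your arguments for commutativity, $G$-equivariance, and descent to $\sigma(A(G))$ match the paper's essentially verbatim. The difference is in step (d), showing the image is traceless on $\sigma(V)$.

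You propose a direct simplification: compute $\Tr(xyz)=3\langle x,y,z\rangle$, then reduce the resulting sum of iterated cross-product expressions to $\tfrac{9}{4}f'$ using Jordan-algebraic identities. The paper does not attempt this. Instead it observes that $\Tr(\,\cdot\odot_1\cdot\,)\big|_{\sigma(V)}$ is a $G$-invariant symmetric bilinear form on the irreducible module $\sigma(V)$, hence a scalar multiple of $f'$; a single explicit evaluation, namely $\upi{a}{i}\upi{a}{i}\mathbf{1}_i\odot_1\upi{b}{i}\upi{b}{i}\mathbf{1}_i$ for isotropic $a,b$ with $\langle a,b\rangle\neq 0$ (equation~\eqref{eq:exodot1}), then shows that the trace vanishes while $f'$ does not, forcing the scalar to be zero. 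This Schur-type reduction sidesteps the cross-product identities you invoke ``in the spirit of'' \cref{lem:crossproductproperties}(iv), which you have not actually carried out; that step is the genuine gap in your write-up. The paper's route also has the bonus that the explicit value \eqref{eq:exodot1} is reused in \cref{prop:multbasis} to prove $\odot_1$ and $\odot_2$ are linearly independent, and again in \cref{identificationag2} to pin down the parameters of $\star$.
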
	
\begin{proof}
	Note first that \cref{eq:necessaryformult} is satisfied. The product is clearly $G$-equivariant and commutative on $\sigma(A(G))$. To ensure that the product is well-defined when restricting to $\sigma(V)$, it suffices to check that the right-hand side of \cref{eq:def:odot2} is contained in the kernel of the trace map. But note that $\Tr(\cdot \odot_2|_{\sigma(V)} \cdot)$ is a $G$-invariant bilinear form, and thus equal to $f'$ up to a scalar. We prove this scalar has to be zero by the following computation, where $a,b$ are octonions with $a\cdot a=b\cdot b =0$ and $\langle a,b\rangle \neq 0$:
		\begin{multline}\label{eq:exodot1}
			\upi{a}{i}\upi{a}{i}\mathbf{1}_i\odot_1\upi{b}{i}\upi{b}{i}\mathbf{1}_i = 16((\upi{a}{i}\times \mathbf{1}_i)\times(\upi{b}{i}\times \mathbf{1}_i))\upi{a}{i}\upi{b}{i}- \tfrac{1}{12} f'(\upi{a}{i}\upi{a}{i}\mathbf{1}_i,\upi{b}{i}\upi{b}{i}\mathbf{1}_i)\\
			= 4(\upi{a}{i}\times \upi{b}{i})\upi{a}{i}\upi{b}{i}- \tfrac{1}{ 27}\langle a,b\rangle^2\id_W
			= -2\langle a,b\rangle\mathbf{1}_i\upi{a}{i}\upi{b}{i}- \tfrac{1}{ 27}\langle a,b\rangle^2\id_W\\
			= \tfrac{2}{3}\langle a,b\rangle \upi{a}{i}\upi{b}{i}  + \tfrac{1}{3}\langle a,b\rangle^2\mathbf{1}_i\mathbf{1}_i - \tfrac{1}{ 27}\langle a,b\rangle^2\id_W
		\end{multline}	
	Applying the trace form to the right-hand side of this equation, we obtain zero. As $f'(\upi{a}{i}\upi{a}{i}\mathbf{1}_i,\upi{b}{i}\upi{b}{i}\mathbf{1}_i)$ is non-zero  (see \cref{eq:bilform}), this proves $\Tr(\cdot \odot_1|_{\sigma(V)} \cdot)$ is zero, and thus $\odot_1|_{\sigma(V)}$ has image inside $\sigma(V)$.	
\end{proof}
\begin{remark}
	It is possible to check directly that the trace of the right-hand side of \cref{eq:def:odot1} is zero, but we will need \cref{eq:exodot1} later on, in \cref{prop:multbasis}.
\end{remark}

\subsubsection{Finding multiplication 2} For the second multiplication, the idea is to find an outer automorphism $\varphi\colon V\to V$ such that $\varphi(a^g) = \varphi(a)^{\phi(g)}$, and a multiplication $\boxdot$ such that $a^g\boxdot b^g = {(a\boxdot b)}^{\phi(g)}$. Then $\varphi\circ (\cdot  \boxdot \cdot  )$ is $G$-equivariant.

This idea works, but only if we first lift to the cubic tensor power of $W$, instead of working with the symmetric cube of $W$.
\begin{lemma}\label{lem:psi}
	Let $\psi \colon W^{\otimes3} \otimes \sigma(A(G)) \to \sigma(A(G))$
	be defined by
	\[ \psi(a_1\otimes a_2 \otimes a_3 ,x_1x_2x_3) = \sum_{\sigma \in \mathcal{S}_3} (a_1\times x_{\sigma(1)})(a_2\times a_3 ) (x_{\sigma(2)}\times x_{\sigma(3)}).  \]
	Then $\psi$ is well defined.
\end{lemma}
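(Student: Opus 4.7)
The plan is to verify three things in order: that the expression is symmetric in $x_1,x_2,x_3$ (so it descends from $W^{\otimes 3}$ to $\Sq^3 W$ in the second slot), that it vanishes on the subrepresentation $V(3\omega_1) \subseteq \Sq^3 W$ (so it further descends to the quotient $\sigma(A(G))$), and that the image always lies in $\sigma(A(G)) \subseteq \End(W)$.

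First I would dispatch the symmetry. By \cref{lem:crossproductproperties}(iv) the cross product is symmetric (the Jordan product and the bilinear form are both symmetric). Since we sum over the whole group $\mathcal{S}_3$, any reindexing of $(x_1,x_2,x_3)$ merely permutes the summands; this is immediate and purely bookkeeping. I would also note that the image of each summand is of the form $u v w$ with $u = a_1 \times x_{\sigma(1)}$, $v = a_2 \times a_3$, $w = x_{\sigma(2)} \times x_{\sigma(3)}$ all in $W$, so by the spanning statement preceding \cref{rem:identif} the whole output sits inside $\sigma(A(G))$.

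The heart of the lemma is the second step. To descend from $\Sq^3 W$ to $\sigma(A(G)) = \Sq^3 W / V(3\omega_1)$ I need $\psi(a_1\otimes a_2\otimes a_3, \cdot)$ to kill $V(3\omega_1)$. Because $V(3\omega_1)$ is irreducible, it is the $G$-span of its highest weight vector, which by \cref{prop:maxweightisrank1} can be taken to be $v^+ v^+ v^+$ with $v^+ \times v^+ = 0$. Translating by $G$, $V(3\omega_1)$ is spanned by cubes $v \cdot v \cdot v$ where $v$ ranges over rank $1$ elements of $W$. Plugging $x_1 = x_2 = x_3 = v$ into the formula forces every summand to contain the factor $v \times v = 0$, so the entire sum vanishes. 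By linearity this kills $V(3\omega_1)$.

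The main (and only real) obstacle is conceptual rather than computational: recognising that well-definedness on the quotient reduces to a single vanishing check on a weight vector, where it falls out for free from $v^+ \times v^+ = 0$. Everything else — symmetry in the $x_i$ and containment of the image in $\sigma(A(G))$ — is routine. I therefore expect the proof to be short, with the bulk of it spent citing \cref{prop:maxweightisrank1} to reduce to the rank $1$ case.
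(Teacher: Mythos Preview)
Your proposal is correct and follows essentially the same route as the paper: reduce to showing $\psi(a_1\otimes a_2\otimes a_3,\cdot)$ vanishes on $V(3\omega_1)$, use that $V(3\omega_1)$ is spanned by the $G$-orbit of $v^+v^+v^+$, and observe that each such cube produces a factor $v\times v=0$. The paper phrases the reduction via the twisted equivariance $\psi(a_1\otimes a_2\otimes a_3,{v^+}^g{v^+}^g{v^+}^g)=\psi(a_1^{g^{-1}}\otimes a_2^{g^{-1}}\otimes a_3^{g^{-1}},v^+v^+v^+)^{g^{-\top}}$, whereas you bypass this by noting directly that $G$-translates of $v^+$ remain rank~$1$; these are the same observation in different clothing.
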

\begin{proof}
	We want to prove that for $v \in V(3\omega_1)$, we have $\psi(a_1\otimes a_2\otimes a_3,v) = 0 $ for all $a_1\otimes a_2\otimes a_3 \in W^{\otimes 3}$. Let $v^+$ be a maximal weight vector of $W$. Then $v^+v^+v^+$ is a maximal weight vector of $V(3\omega_1)$, and $\psi(a_1\otimes a_2\otimes a_3,v^+v^+v^+) = 0 $ for all $a_1\otimes a_2\otimes a_3 \in W^{\otimes 3}$. Now we have $\psi(a_1\otimes a_2\otimes a_3,{v^+}^g{v^+}^g{v^+}^g) = \psi(a_1^{g^{-1}}\otimes a_2^{g^{-1}}\otimes a_3^{g^{-1}},v^+v^+v^+)^{g^{-\top}} = 0$. As elements of the form ${v^+}^g{v^+}^g{v^+}^g$ with $g\in G$ span $V(3\omega_1)$, we are done.
\end{proof}

In fact, using this map we can obtain an outer automorphism. Note that because of the uniqueness of outer automorphisms on irreps, this has to be equal to the transposition operator (up to a scalar) on $V$, see \cref{thm:extension,prop:transposeisouterauto}.
\begin{proposition}\label{prop:outeraut}
	Let $\psi_{\id_W}\colon W^{\otimes 3} \to \sigma(A(G)) $ be the map defined by \[\psi_{\id_W}(a_1\otimes a_2\otimes a_3) = \psi(a_1\otimes a_2\otimes a_3,\id_W), \] and $P\colon \Sq^3 W \to W^{\otimes 3} $ by \[P(a_1a_2a_3) = \sum_{\tau_a \in \mathcal{S}_3} a_{\tau_a(1)}\otimes a_{\tau_a(2)}\otimes a_{\tau_a(3)}.\] Then the kernel of $\varphi = \psi_{\id_W}\circ P\colon \Sq^3W \to \sigma(A(G))$ contains $V(3\omega_1)$ and the natural projection $\varphi\colon  \sigma(A(G))\to \sigma(A(G)) $  is an outer automorphism, more specifically $\varphi(v^g) = \varphi(v)^{g^{-\top}}$ for $v\in \sigma(A(G))$.
\end{proposition}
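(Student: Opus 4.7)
The plan is to reduce both claims to a single \emph{twisted equivariance} identity $\varphi(v^g) = \varphi(v)^{g^{-\top}}$ on all of $\Sq^3 W$. Granting this, the inclusion $V(3\omega_1) \subseteq \ker \varphi$ is automatic: $V(3\omega_1)$ is $G$-irreducible, generated by its highest weight vector $v^+ v^+ v^+$, so it suffices to kill $\varphi$ at that single vector, after which twisted equivariance kills the entire $G$-orbit and hence the whole submodule. Explicitly, $P(v^+ v^+ v^+) = 6\, v^+ \otimes v^+ \otimes v^+$, and every summand in the definition of $\psi(v^+ \otimes v^+ \otimes v^+, \id_W)$ carries the factor $a_2 \times a_3 = v^+ \times v^+$, which vanishes by \cref{prop:maxweightisrank1}. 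Thus $\varphi(v^+ v^+ v^+) = 0$. Factoring through the quotient via the identification in \cref{rem:identif} then produces the induced map $\sigma(A(G)) \to \sigma(A(G))$, which inherits the twist relation.

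For the twisted equivariance itself, the workhorse is \cref{lem:crossproductproperties}(iii), stating $x^g \times y^g = (x \times y)^{g^{-\top}}$. The map $P$ is manifestly $G$-equivariant for the diagonal action, so I only need to verify $\psi_{\id_W}(g a_1 \otimes g a_2 \otimes g a_3) = \psi_{\id_W}(a_1 \otimes a_2 \otimes a_3)^{g^{-\top}}$. Expand $\id_W$ as a sum of monomials $x_1 x_2 x_3$; since $\id_W^g = \id_W$ under conjugation, this expansion is unchanged upon replacing each $x_j$ by $g x_j$. Applying \cref{lem:crossproductproperties}(iii) to each of the three cross-product factors $g a_1 \times g x_{\sigma(1)}$, $g a_2 \times g a_3$, and $g x_{\sigma(2)} \times g x_{\sigma(3)}$ pulls out a common $g^{-\top}$, turning the summand into an operator of the form $(g^{-\top} b_1)(g^{-\top} b_2)(g^{-\top} b_3)$. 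Because $g^{-\top} \in G$ by \cref{lem:crossproductproperties}(ii) and $G$ preserves the trilinear form $\langle\cdot,\cdot,\cdot\rangle$ underlying \cref{def:operators}(iv), this operator coincides with the conjugate $(b_1 b_2 b_3)^{g^{-\top}}$. Summing yields $\varphi(v^g) = \varphi(v)^{g^{-\top}}$.

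The main obstacle is not conceptual but bookkeeping: one must carefully verify the identity $(g^{-\top} b_1)(g^{-\top} b_2)(g^{-\top} b_3) = (b_1 b_2 b_3)^{g^{-\top}}$ for the operators of \cref{def:operators}(iv), which boils down to writing out both sides using the trilinear form and invoking $G$-invariance of $\langle\cdot,\cdot,\cdot\rangle$ applied to $g^\top \in G$. A secondary point, if the word \emph{outer automorphism} is to be taken literally, is non-vanishing of the induced map. Since $\sigma(A(G)) = k\cdot\id_W \oplus \sigma(V)$ is a decomposition into $G$-irreducibles, each self-dual (\cref{prop:outeriffselfdual}), Schur's lemma reduces this to verifying $\bar{\varphi}$ is non-zero on one test vector in each summand, for instance at the elements of \cref{prop:examplecomp}; the uniqueness statement invoked in the paragraph preceding the proposition (via \cref{thm:extension,prop:transposeisouterauto}) then forces $\bar\varphi$ to agree with the transpose up to scalars on each summand.
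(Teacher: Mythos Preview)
Your proof is correct and follows essentially the same approach as the paper: both verify vanishing at $v^+v^+v^+$ via $v^+\times v^+=0$ and propagate along the $G$-orbit using the twisted equivariance, which in turn is deduced from \cref{lem:crossproductproperties}(iii) together with $\id_W^g=\id_W$. Your final paragraph on non-vanishing is not part of the paper's proof of this proposition---it is deferred there to the subsequent corollary---but is a reasonable inclusion.
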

\begin{proof}
	One can provide the exact same argument as in \cref{lem:psi} to prove that $V(3\omega_1)\subseteq \ker(\varphi)$. For the second claim, let $a_1a_2a_3\in \sigma(A(G))$ be arbitrary. Then 
	\begin{multline*}
		\varphi(a_1^ga_2^ga_3^g) = \psi\left(\sum_{\tau_a \in \mathcal{S}_3} a_{\tau_a(1)}^g\otimes a_{\tau_a(2)}^g\otimes a_{\tau_a(3)}^g, \id_W\right) \\=\psi\left(\sum_{\tau_a \in \mathcal{S}_3} a_{\tau_a(1)}\otimes a_{\tau_a(2)}\otimes a_{\tau_a(3)}, {\id_W}^{g^{-1}}\right)^{g^{-\top}} =  \psi\left(\sum_{\tau_a \in \mathcal{S}_3} a_{\tau_a(1)}\otimes a_{\tau_a(2)}\otimes a_{\tau_a(3)}, \id_W\right)^{g^{-\top}} \\=\varphi(a_1a_2a_3)^{g^{-\top}}. \qedhere
	\end{multline*} 
\end{proof}
\begin{corollary}
	The outer automorphism $\tfrac{1}{36}\varphi$ is an order $2$ automorphism on $\sigma(V)$. Moreover, it is equal to the restriction of the transposition operator on $\sigma(V)$ as in \cref{prop:transposeisouterauto}. 
\end{corollary}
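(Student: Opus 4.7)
The plan is to apply Schur's lemma to identify $\varphi|_{\sigma(V)}$ as a scalar multiple of $\top|_{\sigma(V)}$, and to pin down the scalar by an explicit computation on a single convenient element. I would first observe that $\sigma(A(G)) = k\cdot\id_W \oplus \sigma(V)$ as a $G$-module with $\sigma(V)$ irreducible. By \cref{prop:outeraut} and \cref{prop:transposeisouterauto}, both $\varphi$ and $\top$ satisfy the twisted equivariance $f(v^g) = f(v)^{g^{-\top}}$. Since $k\cdot\id_W$ is the unique $G$-invariant line in $\sigma(A(G))$ (even under the outer twist, since $V$ is self-dual by \cref{prop:outeriffselfdual}), it is stable under either map, hence so is its $G$-invariant complement $\sigma(V)$. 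The space of $G$-equivariant maps between $\sigma(V)$ and its outer twist is then one-dimensional by Schur, so there is a scalar $c \in k$ with $\varphi|_{\sigma(V)} = c\,\top|_{\sigma(V)}$ provided both maps are nonzero.

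To compute $c$, I would test both sides on $x = \upi{a}{1}\upi{a}{1}\mathbf{1}_1$ for an octonion $a$ with $a\cdot a = 0$. The identity $3\langle y_1,y_2,y_3\rangle = \langle y_1\times y_2, y_3\rangle$ together with \cref{def:operators} yields
\[ (y_1 y_2 y_3)^\top = \tfrac{2}{3}\bigl(y_3(y_1\times y_2) + y_1(y_2\times y_3) + y_2(y_3\times y_1)\bigr) - y_1 y_2 y_3, \]
and applying this to $x$, using $\upi{a}{1}\times\upi{a}{1} = 0$ and $\upi{a}{1}\times \mathbf{1}_1 = -\tfrac{1}{2}\upi{a}{1}$ from \cref{lem:crossproductproperties}, collapses $\top(x)$ to a short expression in the bilinear operator $\upi{a}{1}\upi{a}{1}$ and the original trilinear operator. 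For $\varphi(x) = \psi_{\id_W}(P(x))$, I would expand $P(x)$ (which produces three distinct tensors after accounting for the repetition in $x$) and pair each with the $9$-term expression of $\id_W$ from \cref{lem:idexpress}; most pairings vanish because of the same cross-product identities together with orthogonality of the basis octonions appearing in $\id_W$, and the surviving terms should assemble into $36$ times the previous expression for $\top(x)$.

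The main obstacle is this explicit expansion of $\varphi(x)$: combining the sum over $\mathcal{S}_3$ inside $\psi$ with the sum over $\mathcal{S}_3$ inside $P$ and the nine terms of $\id_W$ produces a large number of summands, and organising them requires careful bookkeeping, possibly aided by the observation that the output must be invariant under the stabiliser of $x$ in $G$, drastically restricting its shape. Once $c = 36$ is verified, the order-$2$ claim is automatic: $\top^2 = \id_{\End(W)}$ by definition, so $\bigl(\tfrac{1}{36}\varphi\bigr)^2 = \top^2|_{\sigma(V)} = \id_{\sigma(V)}$, and the nonvanishing of both $\varphi|_{\sigma(V)}$ and $\top|_{\sigma(V)}$ (needed to legitimately invoke Schur's lemma) is confirmed along the way by the explicit computation.
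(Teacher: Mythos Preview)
Your proposal is correct and follows essentially the same route as the paper: apply Schur's lemma to the twisted-equivariant maps $\varphi$ and $\top$ on the irreducible $\sigma(V)$, then pin down the scalar by computing on the test element $\upi{a}{i}\upi{a}{i}\mathbf{1}_i$ with $a$ isotropic (for which $\top$ acts trivially, as your transpose formula and the paper's direct observation both confirm, and $\varphi$ acts by $36$). The only cosmetic difference is ordering: the paper first computes $\varphi(x)=36x$, deduces $(\tfrac{1}{36}\varphi)^2=\id$ from Schur applied to $\varphi^2$, and then identifies $\tfrac{1}{36}\varphi$ with $\top$ via Schur applied to $\varphi\circ\top$; you instead identify $\varphi=c\,\top$ first and read off both conclusions from $c=36$ and $\top^2=\id$.
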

\begin{proof}
We can use the expression for the identity in \cref{lem:idexpress}.	For $a$ an octonion with $a\cdot a = 0$ and $i\in \{1,2,3\}$, we can compute the value $\varphi(\upi{a}{i} \upi{a}{i} \mathbf{1}_i)$: 	\begin{align*}
		\varphi(\upi{a}{i}\upi{a}{i}\mathbf{1}_i) &=  4\sum_{\sigma \in \mathcal{S}_3} (\upi{a}{i}\times x_{\sigma(1)})(\upi{a}{i}\times \mathbf{1}_i ) (x_{\sigma(2)}\times x_{\sigma(3)})\\
		&= -2\sum_{\sigma \in \mathcal{S}_3} (\upi{a}{i}\times x_{\sigma(1)})(\upi{a}{i} ) (x_{\sigma(2)}\times x_{\sigma(3)}) \\
		&= -6\cdot 2 \Big((\upi{a}{i}\times \mathbf{1}_i)(\upi{a}{i} ) (\mathbf{1}_i) + \tfrac{1}{2} \sum_{\substack{1\leq j\leq 8 \\ k\in \{1,2,3\}}} (\upi{a}{i}\times {(e_j)}_k)(\upi{a}{i}) ({(e_j)}_k)\Big)\\
		&= 36\upi{a}{i}\upi{a}{i}\mathbf{1}_i
	\end{align*}
	As $\varphi^2$ is a scalar matrix by Schur's Lemma, the first claim follows. The second claim follows from the fact that $\varphi\circ \top$ is a $G$-equivariant endomorphism of $\sigma(V)$ by \cref{prop:transposeisouterauto}, and thus a scalar matrix. As it fixes $\upi{a}{i}\upi{a}{i}\mathbf{1}_i$, it has to equal the identity.
\end{proof}
In this proof, an expression occurred that we will see multiple times throughout. To shorten this expression, we will introduce some notation.
\begin{notation}\label{not:orth}
	Given an Albert element $x\in W$, we write
	\[\orth(x) \coloneqq \sum_{b\in B} (x\times b)xb .\]
	In this expression, $B$ denotes the orthonormal basis $\{\mathbf{1}_i, \tfrac{1}{\sqrt{2}}\upi{(e_j)}{i} \mid 1\leq i \leq 3, 1\leq j \leq 8\}$ of $W$.
\end{notation}
With this definition, we get that $\orth(\upi{a}{i}) = -3\upi{a}{i}\upi{a}{i}1_i$.

The previous corollary can make computations easier, though it is nice to describe all operators fully in terms of triples and cross products.

\begin{proposition}\label{prop:boxdot}
	Let $\cdot \boxdot \cdot \colon \sigma(A(G))\otimes \sigma(A(G)) \to W^{\otimes 3}$ be defined by 
	\[a_1a_2a_3 \boxdot b_1b_2b_3 \coloneqq \sum_{\tau_a,\tau_b \in \mathcal{S}_3} (a_{\tau_a(1)}\times  b_{\tau_b(1)})\otimes(a_{\tau_a(2)}\times a_{\tau_a(3)})\otimes(b_{\tau_b(2)}\times b_{\tau_b(3)}).  \]
	Then $\boxdot$ is well-defined, and $(a_1a_2a_3)^g \boxdot (b_1b_2b_3)^g = (a_1a_2a_3 \boxdot b_1b_2b_3)^{g^{-\top}}$.
\end{proposition}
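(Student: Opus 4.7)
The plan is to establish the twisted equivariance first at the level of $\Sq^3W\otimes\Sq^3W$ and then use it to deduce well-definedness. The defining formula is manifestly symmetric in $a_1,a_2,a_3$ (we sum over all $\tau_a\in\mathcal{S}_3$) and in $b_1,b_2,b_3$, so it unambiguously defines a map $\Sq^3W\otimes\Sq^3W\to W^{\otimes 3}$, which I will also denote by $\boxdot$.

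For the equivariance, fix $g\in G$. Every factor appearing on the right-hand side of the defining formula is a cross product of two elements of $W$, and by \cref{lem:crossproductproperties}(iii) we have $x^g\times y^g=(x\times y)^{g^{-\top}}$. Substituting $a_i\mapsto a_i^g$ and $b_i\mapsto b_i^g$ therefore yields
\begin{align*}
    (a_1a_2a_3)^g\boxdot (b_1b_2b_3)^g&=\sum_{\tau_a,\tau_b\in\mathcal{S}_3}(a_{\tau_a(1)}\times b_{\tau_b(1)})^{g^{-\top}}\otimes(a_{\tau_a(2)}\times a_{\tau_a(3)})^{g^{-\top}}\otimes(b_{\tau_b(2)}\times b_{\tau_b(3)})^{g^{-\top}}\\
    &=(a_1a_2a_3\boxdot b_1b_2b_3)^{g^{-\top}}\text{,}
\end{align*}
with $g^{-\top}$ acting diagonally on $W^{\otimes 3}$ in the last line.

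For well-definedness, I need to show that $v\boxdot w=0$ as soon as $v\in V(3\omega_1)$ (and similarly for the second slot). Let $v^+\in W$ be a maximal weight vector. By \cref{prop:maxweightisrank1} we have $v^+\times v^+=0$, so plugging $v^+v^+v^+$ into the first slot makes every summand contain the vanishing factor $a_{\tau_a(2)}\times a_{\tau_a(3)}=v^+\times v^+$, and therefore $v^+v^+v^+\boxdot w=0$ for every $w\in\Sq^3W$. The equivariance established above then propagates this vanishing along the $G$-orbit: for any $g\in G$,
\[(v^+v^+v^+)^g\boxdot w=\bigl(v^+v^+v^+\boxdot w^{g^{-1}}\bigr)^{g^{-\top}}=0.\]
Since $V(3\omega_1)$ is the irreducible $G$-submodule of $\Sq^3W$ generated by $v^+v^+v^+$, it is spanned by such translates, so $v\boxdot w=0$ whenever $v\in V(3\omega_1)$. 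The symmetric argument using the factor $b_{\tau_b(2)}\times b_{\tau_b(3)}$ disposes of the second slot, exactly as in \cref{lem:psi}.

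The only real obstacle is the order of operations: the equivariance has to be proved on the unquotiented $\Sq^3W\otimes\Sq^3W$ before it can be leveraged to push the vanishing from $v^+v^+v^+$ to all of $V(3\omega_1)$. Once that ordering is respected, the proof reduces to a Leibniz-style application of \cref{lem:crossproductproperties}(iii) combined with the rank one identity $v^+\times v^+=0$.
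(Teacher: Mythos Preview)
Your proof is correct and follows essentially the same route as the paper, which simply notes that the argument is ``completely analogous to the proof of \cref{lem:psi}'': establish the twisted equivariance via \cref{lem:crossproductproperties}(iii), check vanishing on the highest weight vector $v^+v^+v^+$ using $v^+\times v^+=0$, and propagate along the $G$-orbit to all of $V(3\omega_1)$. Your explicit remark about the order of operations (proving equivariance on $\Sq^3W\otimes\Sq^3W$ before descending) is a helpful clarification the paper leaves implicit.
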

\begin{proof}
	This is completely analogous to the proof of \cref{lem:psi}.
\end{proof}

\begin{proposition}\label{prop:mult2}
Let $\id_W = \sum_m x_{1,m}x_{2,m}x_{3,m}\in \End(W)$, and define for $a_1a_2a_3,b_1b_2b_3\in \End(V)$
	\begin{multline}\label{eq:def:odot2}
	a_1a_2a_3 \odot_2  b_1b_2b_3  \\ \coloneqq
	\sum_{\substack{m\\ \sigma \in \mathcal{S}_3}}\sum_{\tau_a, \tau_b\in \mathcal{S}_3}  \left(((a_{\tau_a(1)}\times b_{\tau_b(1)})\times x_{m,\sigma(1)}) ((a_{\tau_a(2)}\times a_{\tau_a(3)})\times(b_{\tau_b(2)}\times b_{\tau_b(3)}))(x_{m,\sigma(2)}\times x_{m,\sigma(3)})\right)\\ - \tfrac{5}{4}f'(a_1a_2a_3,b_1b_2b_3)\id_W\text{.} 
\end{multline}
Then $\odot_2$ is a well-defined commutative $G$-equivariant product when restricted to $\sigma(V)$.
\end{proposition}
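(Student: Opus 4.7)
My plan is to reduce each of the required properties (well-definedness, commutativity, $G$-equivariance, and the trace-zero property on $\sigma(V)$) to the corresponding property of one of the building blocks $\boxdot$, $\psi$, and $f'$, each of which has already been established. The key observation is that the first summand on the right-hand side of \cref{eq:def:odot2} equals $\psi(a_1a_2a_3 \boxdot b_1b_2b_3,\, \id_W)$: indeed, writing $\id_W = \sum_m x_{1,m}x_{2,m}x_{3,m}$ as in \cref{lem:idexpress} and expanding $\boxdot$ according to its definition, the formula for $\psi$ reproduces the first part of \cref{eq:def:odot2} exactly. Thus $\odot_2 = \psi(\cdot\boxdot\cdot,\,\id_W) - \tfrac{5}{4}f'(\cdot,\cdot)\id_W$. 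Well-definedness of $\odot_2$ on $\sigma(A(G))\otimes\sigma(A(G))$ (i.e.\@ independence of the chosen $\Sq^3 W$ representatives) is then immediate from \cref{prop:boxdot}, \cref{lem:psi} and the analogous statement for $f'$ established after \cref{def:fprime}.

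For commutativity, I would note that $a_1a_2a_3 \boxdot b_1b_2b_3$ is not itself symmetric in $(a,b)$, but becomes so after pairing with $\id_W$ through $\psi$: swapping $a$ and $b$ interchanges the second and third tensor slots of each summand, and in $\psi(y_1\otimes y_2\otimes y_3,\,\id_W)$ these slots enter only through $y_2\times y_3$ (symmetric in itself) and the fully symmetrised factor $\sum_\sigma(x_{m,\sigma(2)}\times x_{m,\sigma(3)})$, so the outcome is unchanged. Symmetry of $f'$ is visible from \cref{def:fprime}. For $G$-equivariance, I would combine the two twisted-equivariance statements. By \cref{prop:boxdot}, $(a_1a_2a_3)^g \boxdot (b_1b_2b_3)^g = (a_1a_2a_3\boxdot b_1b_2b_3)^{g^{-\top}}$, while $\id_W$ is fixed both by $G$ and by the transposition operator of \cref{prop:transposeisouterauto}, so in particular $\id_W = \id_W^{g^{-\top}}$. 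Running through the same computation as in the proof of \cref{prop:outeraut} then gives $\psi(a^g\boxdot b^g,\,\id_W) = \psi((a\boxdot b)^{g^{-\top}},\,\id_W^{g^{-\top}}) = \psi(a\boxdot b,\,\id_W)^g$. Combined with the $G$-invariance of $f'$, this yields equivariance of $\odot_2$.

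Finally, to check that $\odot_2|_{\sigma(V)}$ has image in $\sigma(V)$ I would reuse the strategy of \cref{prop:mult1}: the bilinear form $\Tr(\cdot\odot_2|_{\sigma(V)}\cdot)$ on $V$ is $G$-invariant and symmetric, hence a scalar multiple of $f'$ by the one-dimensionality of such invariant forms. The main obstacle is the explicit verification that this scalar vanishes, which is what pins down the coefficient $\tfrac{5}{4}$ in the definition. I would compute $\upi{a}{i}\upi{a}{i}\mathbf{1}_i \odot_2 \upi{b}{i}\upi{b}{i}\mathbf{1}_i$ for octonions $a,b\in\mathbb{O}$ with $a\cdot a = b\cdot b = 0$ and $\langle a,b\rangle\neq 0$, using \cref{lem:idexpress} to expand $\id_W$ and \cref{albertmultiplication} together with the cross-product formula in \cref{lem:crossproductproperties}(iv) to resolve all brackets, then compare the resulting trace with the nonzero value $\tfrac{4}{9}\langle a,b\rangle^2$ of $f'$ given in \cref{eq:bilform}. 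Since $\id_W$ is fixed by the outer automorphism, I expect the $\orth$-notation of \cref{not:orth} to simplify several of the intermediate sums and keep the calculation manageable.
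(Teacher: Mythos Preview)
Your proposal is correct and follows essentially the same route as the paper: you identify $\odot_2 = \psi_{\id_W}\circ(\cdot\boxdot\cdot) - \tfrac{5}{4}f'(\cdot,\cdot)\id_W$, read off well-definedness and $G$-equivariance from the twisted equivariance of $\boxdot$ and $\psi_{\id_W}$, and reduce the trace-zero check to the same explicit evaluation on $\upi{a}{i}\upi{a}{i}\mathbf{1}_i$ and $\upi{b}{i}\upi{b}{i}\mathbf{1}_i$ that the paper carries out in \cref{eq:exodot2}. Your treatment is in fact more explicit than the paper's on commutativity (which the paper leaves implicit) and on the equivariance argument; the only thing you have not yet done is the actual computation \cref{eq:exodot2}, which indeed passes through $\orth(\mathbf{1}_i)$ as you anticipate.
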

\begin{proof}
	Note that $\cdot \odot_2 \cdot = \psi_{\id_W}\circ(\cdot \boxdot \cdot ) - \tfrac{5}{4}f'(\cdot,\cdot)\id_W$. The product is thus well-defined on $\sigma(A(G))$ and $G$-equivariant. To ensure that the product is well-defined when restricting to $\sigma(V)$, it suffices to check that the right-hand side of \cref{eq:def:odot2} is contained in the kernel of the trace map. But note that $\Tr(\cdot \odot_2|_{\sigma(V)} \cdot)$ is a $G$-invariant form, and thus equal to $f'$ up to a scalar. We prove this scalar has to be zero by the following computation, where $a,b$ are octonions with $a\cdot a=b\cdot b =0$ and $\langle a,b \rangle \neq 0$:
	\begin{multline}\label{eq:exodot2}
			\upi{a}{i}\upi{a}{i}\mathbf{1}_i\odot_2\upi{b}{i}\upi{b}{i}\mathbf{1}_i = \sum_{n,\sigma\in \mathcal{S}_3} 16 \left(((\upi{a}{i}\times \upi{b}{i})\times x_{n,\sigma(1)})((\upi{a}{i}\times e)\times(\upi{b}{i}\times e))(x_{n,\sigma(2)}\times x_{n,\sigma(3)})\right) \\-\tfrac{5}{4}f'(\upi{a}{i}\upi{a}{i}\mathbf{1}_i,\upi{b}{i}\upi{b}{i}\mathbf{1}_i)\id_W   \\
			= \langle a,b\rangle ^2 \sum_{n,\sigma\in \mathcal{S}_3} \left((\mathbf{1}_i\times x_{n,\sigma(1)})(\mathbf{1}_i)(x_{n,\sigma(2)}\times x_{n,\sigma(3)})\right)- \tfrac{5}{9}\langle a,b\rangle^2\id_W    \\
			=\langle a,b\rangle ^2 6\orth{\mathbf{1_i}} - \tfrac{5}{9}\langle a,b\rangle^2\id_W  \\
			=\langle a,b\rangle ^2\left( 6\mathbf{1}_i\mathbf{1}_j\mathbf{1}_k -\tfrac{3}{2} \sum_{r=0}^7\mathbf{1}_i{(e_r)}_i{(e_r)}_i \right) - \tfrac{5}{9}\langle a,b\rangle^2\id_W  \\
			= \langle a,b\rangle ^2(5 \mathbf{1}_i\mathbf{1}_i +\mathbf{1}_j\mathbf{1}_j+\mathbf{1}_k\mathbf{1}_k + I_i) - \tfrac{5}{9}\langle a,b\rangle^2\id_W \\
			=\langle a,b\rangle ^2(4 \mathbf{1}_i\mathbf{1}_i - I_j - I_k) + \tfrac{4}{9}\langle a,b\rangle^2\id_W\text{.}
	\end{multline}
	Applying the trace form to the right-hand side of this equation, we obtain zero. As $f'(\upi{a}{i}\upi{a}{i}\mathbf{1}_i,\upi{b}{i}\upi{b}{i}\mathbf{1}_i)$ is non-zero (see \cref{eq:bilform}), this proves $\Tr(\cdot \odot_2|_{\sigma(V)} \cdot)$ is zero, and thus $\odot_2|_{\sigma(V)}$ has image inside $\sigma(V)$.
\end{proof}

\begin{proposition}\label{prop:multbasis}
	The space of commutative $G$-equivariant products on the representation $V$ of highest weight $\omega_1+\omega_6$ is spanned by $\odot_1$ and $\odot_2$, restricted to $\sigma(V)$, defined in \cref{prop:mult1,prop:mult2}.
\end{proposition}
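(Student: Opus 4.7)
The plan is to combine a dimension bound from character theory with a concrete linear-independence check between the two multiplications produced in \cref{prop:mult1,prop:mult2}.

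\textbf{Step 1: Dimension count.} As noted at the start of \cref{sec:defmultg2}, character computations (cf.\@ \cref{reptheoryremark}) show that $\dim_k \Hom_G(\Sq^2 V, V) = 2$, where $V = V(\omega_1+\omega_6)$. Combined with \cref{prop:mult1,prop:mult2}, which show that $\odot_1|_{\sigma(V)}$ and $\odot_2|_{\sigma(V)}$ both lie in this $\Hom$-space, it suffices to prove that they are linearly independent.

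\textbf{Step 2: Linear independence via an explicit pair.} I would reuse the octonions $a, b \in \Oct$ with $a\cdot a = b\cdot b = 0$ and $\langle a,b\rangle \neq 0$ already used in \cref{prop:examplecomp}; their existence is guaranteed in the split octonion algebra. A quick application of \cref{lem:crossproductproperties}(iv) together with $N(a) = 0$ shows that $\upi{a}{i}\upi{a}{i}\mathbf{1}_i$ (and similarly for $b$) lies in $\sigma(V)$. Then \cref{eq:exodot1,eq:exodot2} already evaluate $\upi{a}{i}\upi{a}{i}\mathbf{1}_i \odot_r \upi{b}{i}\upi{b}{i}\mathbf{1}_i$ explicitly for $r = 1,2$, yielding linear combinations of the five operators $\upi{a}{i}\upi{b}{i}$, $\mathbf{1}_i\mathbf{1}_i$, $I_j$, $I_k$, $\id_W$. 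The essential observation is that the coefficient of $\upi{a}{i}\upi{b}{i}$ equals $\tfrac{2}{3}\langle a,b\rangle \neq 0$ in the $\odot_1$-output but vanishes in the $\odot_2$-output, whose terms live solely in the span of $\mathbf{1}_i\mathbf{1}_i$, $I_j$, $I_k$, $\id_W$.

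\textbf{Step 3: Closing the argument.} Given a hypothetical relation $c_1 \odot_1 + c_2 \odot_2 = 0$ on $\sigma(V)$, plugging in the pair above yields a vanishing linear combination of the above five operators. A routine check in $\End(W)$, performed by evaluating each operator at the distinguished vectors $\upi{a}{i}$, $\mathbf{1}_i$, $\mathbf{1}_j$, $\upi{e_1}{j}$, shows that these five operators are linearly independent. Hence every coefficient in the combination vanishes; the coefficient of $\upi{a}{i}\upi{b}{i}$ forces $c_1 = 0$, and the remaining relation forces $c_2 = 0$. The only substantive work beyond invoking the prior propositions is this linear-independence verification in $\End(W)$, which is elementary given the explicit actions on the distinguished basis; no genuine obstacle is expected.
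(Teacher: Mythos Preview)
Your proposal is correct and follows essentially the same approach as the paper: invoke the two-dimensionality of $\Hom_G(\Sq^2 V,V)$ noted at the start of \cref{sec:defmultg2}, cite \cref{prop:mult1,prop:mult2} for membership, and read off linear independence from the explicit evaluations \cref{eq:exodot1,eq:exodot2}. The paper's proof simply asserts that linear independence ``follows from \cref{eq:exodot1,eq:exodot2}'' without further comment; your Steps~2--3 spell out the verification (in particular the vanishing vs.\ non-vanishing of the $\upi{a}{i}\upi{b}{i}$-coefficient and the independence of the five operators in $\End(W)$), which is exactly the content the paper leaves implicit.
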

\begin{proof}
	The fact that these are both commutative $E_6$-equivariant products is proven in \cref{prop:mult1,prop:mult2}. The fact that they are linearly independent follows from \cref{eq:exodot1,eq:exodot2}.	
\end{proof}

\begin{remark}\label{noncommutative}
	Note that the space of not necessarily commutative products on the representation of highest weight $\omega_1+\omega_6$ is $3$-dimensional instead of $2$-dimensional. One basis of this space is given by $\Big\{\odot_2 = \Pi \circ \psi_{\id_W}\circ(\cdot \boxdot\cdot), \Pi \circ \psi_{\id_W}\circ(1\, 2)\circ(\cdot \boxdot\cdot),  \Pi \circ \psi_{\id_W}\circ(1\, 3)\circ(\cdot \boxdot\cdot)\Big\}$, where $(i\, j)\colon W^{\otimes3}\to W^{\otimes 3}$ is the isomorphism switching the $i$'th and $j$'th component and $\Pi\colon A(G) \to V(\omega_1+\omega_6)$ denotes the projection. In fact, one can show that $\psi_{\id_W}\circ(1\, 2)\circ(\cdot \boxdot\cdot)+  \psi_{\id_W}\circ(1\, 3)\circ(\cdot \boxdot\cdot) = 18 (\cdot \odot_1 \cdot ) - (\cdot \odot_2 \cdot)$.
\end{remark}	

\subsection{Calculating parameters}

We have finally gathered enough information to prove the main theorem of this section.
\begin{theorem}\label{identificationag2}
	Suppose $G$ is a group of type $^1E_6$ with trivial Tits algebras, and $W$ its corresponding Albert algebra. The algebra $A(G)$ is isomorphic to the subspace $\spank_k\{a_1a_2a_3\mid a_1,a_2,a_3 \in W\}\subset \End(W)$ endowed with the multiplication $\star$ given by
	\begin{multline}\label{eq:multmaintheorem}
		a_1a_2a_3 \star b_1b_2b_3 = \left(\tfrac{1}{24}\langle a_1,a_2,a_3\rangle \langle b_1,b_2,b_3 \rangle -\tfrac{1}{432}f'(a_1a_2a_3,b_1b_2b_3 )\right)\id_W
		\\ -\tfrac{1}{12}\langle a_1,a_2,a_3\rangle b_1b_2b_3-\tfrac{1}{12}\langle b_1,b_2,b_3 \rangle a_1a_2a_3\		
		\\+ \tfrac{1}{18} \left(a_1a_2a_3\right)\odot_1 \left(b_1b_2b_3\right) - \tfrac{1}{216}\left(a_1a_2a_3\right)\odot_2 \left(b_1b_2b_3\right)\text{,}
	\end{multline}
	where $\odot_1,\odot_2$ and $f'$ are defined in \cref{eq:def:odot1,eq:def:odot2,def:fprime}.	
	
\end{theorem}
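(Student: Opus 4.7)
The plan is to identify $A(G)$ with its image $\sigma(A(G))=\spank_k\{a_1a_2a_3\mid a_i\in W\}\subset\End(W)$ via the injective map $\sigma$ of \cref{embedding}, under which the unit $\id_{\g}$ corresponds to $\id_W$, and then to use representation theory to reduce the formula \eqref{eq:multmaintheorem} to the determination of only five scalars. First, as a $G$-module, $\sigma(A(G))=k\cdot\id_W\oplus\sigma(V)$ with $\sigma(V)$ the trace-zero subspace and $V\cong V(\omega_1+\omega_6)$ irreducible. By \cite[Example~A.6]{chayet2020class}, the product $\star$ is completely determined by its scalar part, a $G$-invariant symmetric bilinear form on $\sigma(V)$, and a $G$-equivariant commutative product on $\sigma(V)$. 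Character computations show the former space is one-dimensional (spanned by $f'|_{\sigma(V)}$ of \cref{def:fprime}), and by \cref{prop:multbasis} the latter is two-dimensional, spanned by $\odot_1|_{\sigma(V)}$ and $\odot_2|_{\sigma(V)}$. Lifting these back to $\sigma(A(G))$ via the unrestricted operators $\odot_1,\odot_2$ of \cref{prop:mult1,prop:mult2}---which already carry the appropriate $\id_W$ correction---the product $\star$ necessarily has the form
\begin{align*}
a_1a_2a_3\star b_1b_2b_3 &= \bigl(c_0\langle a_1,a_2,a_3\rangle\langle b_1,b_2,b_3\rangle+c_1 f'(a_1a_2a_3,b_1b_2b_3)\bigr)\id_W\\
&\quad +c_2\bigl(\langle a_1,a_2,a_3\rangle b_1b_2b_3+\langle b_1,b_2,b_3\rangle a_1a_2a_3\bigr)\\
&\quad +c_3\,(a_1a_2a_3)\odot_1(b_1b_2b_3)+c_4\,(a_1a_2a_3)\odot_2(b_1b_2b_3)
\end{align*}
for unique scalars $c_0,\ldots,c_4\in k$, and the theorem claims that these equal $\tfrac{1}{24},-\tfrac{1}{432},-\tfrac{1}{12},\tfrac{1}{18},-\tfrac{1}{216}$.

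The next step is to pin down these five scalars by five independent linear relations. Two relations will come from the unit axiom $\id_W\star x=x$ for $x\in\sigma(V)$ (matching the $\id_W$- and $\sigma(V)$-components separately), and a third from $\id_W\star\id_W=\id_W$, using the expansion of $\id_W$ given in \cref{lem:idexpress}. The remaining two relations I obtain by evaluating both sides on the pair $\upi{a}{i}\upi{a}{i}e$ and $\upi{b}{i}\upi{b}{i}e$ with $a\cdot a=b\cdot b=0$ and $\langle a,b\rangle\neq 0$: the product $\upi{a}{i}\upi{a}{i}e\star\upi{b}{i}\upi{b}{i}e$ is computed directly from the Chayet--Garibaldi formula in \cref{prop:examplecomp}, while the value predicted by the proposed formula is expanded via $e=\mathbf{1}_1+\mathbf{1}_2+\mathbf{1}_3$ and bilinearity, using the explicit $\odot_1$- and $\odot_2$-values furnished by \cref{eq:exodot1,eq:exodot2} (and their mixed-index analogues). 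Matching the coefficients of $\id_W$, $\upi{a}{i}\upi{b}{i}$, and $I_j+I_k$ on both sides yields the needed linear equations, and the resulting linear system is readily solved.

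The hard part will be the combinatorial bookkeeping in the last step: the explicit values \cref{eq:exodot1,eq:exodot2} handle only the diagonal case $\upi{a}{i}\upi{a}{i}\mathbf{1}_i\odot_\ell\upi{b}{i}\upi{b}{i}\mathbf{1}_i$, whereas the expansion requires all nine mixed terms $\upi{a}{i}\upi{a}{i}\mathbf{1}_j\odot_\ell\upi{b}{i}\upi{b}{i}\mathbf{1}_k$ for $j,k\in\{1,2,3\}$ and $\ell\in\{1,2\}$. Each of these is a routine application of \cref{lem:crossproductproperties}(iv) and the Jordan multiplication table \cref{albertmultiplication}, but collecting the results into the right linear combinations is the most labor-intensive part of the argument. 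Once those products have been tabulated and the linear system solved, the identification of $A(G)$ with the explicit Albert-algebra model follows immediately.
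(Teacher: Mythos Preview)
Your proposal is correct and follows essentially the same route as the paper, with only a difference in bookkeeping. The paper also works from \cite[Example~A.6]{chayet2020class}, but instead of introducing five unknowns at once it first determines the three parameters on $\sigma(V)$ (i.e.\ writes $\odot=c_3\odot_1+c_4\odot_2$ and $f=\lambda f'$) directly from the comparison of \cref{prop:examplecomp} with \cref{eq:exodot1,eq:exodot2}; only afterwards does it compute $f'(\,\cdot\,,\id_W)$ and $\,\cdot\odot_i\id_W$ (via the element $\mathbf{1}_1\mathbf{1}_2\mathbf{1}_3$) in order to rewrite the projected A.6 formula in the unprojected form \eqref{eq:multmaintheorem}. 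Those are exactly the same auxiliary computations your unit-axiom relations require, so the two arguments are really the same computation organised differently.

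One simplification you are missing: the ``hard part'' you anticipate does not exist. For $j\neq i$ one has $\upi{a}{i}\times\mathbf{1}_j=0$ and $\upi{a}{i}\times\upi{a}{i}=0$ (since $a\cdot a=0$), so the operator $\upi{a}{i}\upi{a}{i}\mathbf{1}_j\in\End(W)$ is identically zero. Hence $\upi{a}{i}\upi{a}{i}e=\upi{a}{i}\upi{a}{i}\mathbf{1}_i$ already in $\sigma(A(G))$, and the ``nine mixed terms'' collapse to the single diagonal one handled by \cref{eq:exodot1,eq:exodot2}; no further tabulation is needed.
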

\begin{proof}
	We may assume $k$ is algebraically closed.
	Multiplication is of the form \begin{multline}\label{eq:multmaintheoreme6}
		a_1a_2a_3 \star b_1b_2b_3 = \left(\tfrac{1}{9^2}\langle a_1,a_2,a_3\rangle \langle b_1,b_2,b_3 \rangle +f\left(a_1a_2a_3-\tfrac{1}{9}\langle a_1,a_2,a_3\rangle\id_W,b_1b_2b_3 -\tfrac{1}{9}\langle b_1,b_2,b_3 \rangle\id_W\right)\right)\id_W\\ +\tfrac{1}{9}\langle a_1,a_2,a_3\rangle \left(b_1b_2b_3-\tfrac{1}{9}\langle b_1,b_2,b_3\rangle\id_W\right)+\tfrac{1}{9}\langle b_1,b_2,b_3 \rangle \left(a_1a_2a_3-\tfrac{1}{9}\langle a_1,a_2,a_3\rangle\id_W\right) \\+ \left(a_1a_2a_3-\tfrac{1}{9}\langle a_1,a_2,a_3\rangle\id_W\right)\odot \left(b_1b_2b_3-\tfrac{1}{9}\langle b_1,b_2,b_3 \rangle\id_W\right)
	\end{multline}
	by \cite[Example A.6]{chayet2020class}.
	Note that $f(a_1a_2a_3,b_1b_2b_3) = \lambda f'$ for some scalar $\lambda\in k$.
	By the computations in \cref{eq:exodot1,eq:exodot2,prop:examplecomp}, we have $\odot = \tfrac{1}{18}\odot_1-\tfrac{1}{216}\odot_2$, and $\lambda = \tfrac{1}{3^3\cdot 2^4} = \tfrac{1}{432}$.

	It only remains to show that
	 $\star$ satisfies the formula above, i.e.\@ to plug in the formulas and simplify. First we will deal with the bilinear form: as $f(\cdot,\id_W)$ is a $G$-equivariant map, it should be equal to the trace form up to a scalar. The following computation determines the scalar:
	 \begin{align*}
	 	f'(a_1a_2a_3,\id_W) &= \sum_n \sum_{\tau_a,\sigma\in \mathcal{S}_3} \langle a_{\tau_a(1)} , x_{n,\sigma(2)} ,x_{n,\sigma(3)}\rangle \langle x_{n,\sigma(1)},a_{\tau_a(2)} ,a_{\tau_a(3)} \rangle \\
	 	&=\tfrac{1}{3}\sum_{n,\sigma}\sum_{\tau_a\in \mathcal{S}_3}\langle a_{\tau_a(1)} , x_{n,\sigma(2)} \times x_{n,\sigma(3)}\rangle \langle x_{n,\sigma(1)},a_{\tau_a(2)} ,a_{\tau_a(3)} \rangle \\
	 	&= \tfrac{1}{3}\sum_{b\in B}\sum_{\tau_a\in \mathcal{S}_3}6 \langle a_{\tau_a(1)} ,b \rangle \langle b,a_{\tau_a(2)} ,a_{\tau_a(3)} \rangle\\&= \tfrac{1}{3}\sum_{\tau_a\in \mathcal{S}_3}6 \langle a_{\tau_a(1)},a_{\tau_a(2)} ,a_{\tau_a(3)} \rangle \\
	 	&= 12\langle a_1,a_2,a_3\rangle.
	 \end{align*}
	 Next, we want to find simplified formulas for the maps $\cdot\odot_1 \id_W$ and $\cdot\odot_2\id_W$. As they are $G$-equivariant, we should have $\cdot\odot_i \id_W = \lambda_i \id_{\sigma(A(G))} + \mu_i \Tr(\cdot)$ for some $\lambda_i,\mu_i\in k$, where $i=1,2$. For $\odot_1$ we compute the following: 
	 \begin{align*}
	\mathbf{1}_1\mathbf{1}_2\mathbf{1}_3 \odot_1 \id_W &= \sum_{i} (\mathbf{1}_i\times (x_{\sigma(2)}\times x_{\sigma(3)}))(\mathbf{1}_i ) (x_{\sigma(1)}) -\tfrac{12}{12}\langle\mathbf{1}_1,\mathbf{1}_2,\mathbf{1}_3\rangle\id_W \\
	&= \sum_i 6\Big((\mathbf{1}_{i}\times \mathbf{1}_j)(\mathbf{1}_{i} ) (\mathbf{1}_j) +(\mathbf{1}_{i}\times \mathbf{1}_k)(\mathbf{1}_{i} ) (\mathbf{1}_k) - \tfrac{1}{4} \sum_{1\leq j\leq 8 } ( \upi{(e_j)}{i})(\mathbf{1}_{i}) (\upi{(e_j)}{i})\Big)\\
	&\hspace{5ex}-\langle\mathbf{1}_1,\mathbf{1}_2,\mathbf{1}_3\rangle\id_W \\
	&=\sum_i 6\Big(\mathbf{1}_1\mathbf{1}_2\mathbf{1}_3 + \tfrac{2}{3}\mathbf{1}_i\mathbf{1}_i+\tfrac{1}{6}I_i\Big)-\langle\mathbf{1}_1,\mathbf{1}_2,\mathbf{1}_3\rangle\id_W \\
	&= 18\mathbf{1}_1\mathbf{1}_2\mathbf{1}_3+18\mathbf{1}_1\mathbf{1}_2\mathbf{1}_3 + \id_W-\langle\mathbf{1}_1,\mathbf{1}_2,\mathbf{1}_3\rangle\id_W \\
	&=36\mathbf{1}_1\mathbf{1}_2\mathbf{1}_3+5\langle\mathbf{1}_1,\mathbf{1}_2,\mathbf{1}_3\rangle\id_W\text{.}
\end{align*}
This implies by the previous paragraph that
\begin{align*}
	 	a_1a_2a_3 \odot_1 \id_W = 36 a_1a_2a_3 +5 \langle a_1a_2a_3\rangle \id_W. 
\end{align*}
We will do a similar computation for $\cdot\odot_2\id_W$, but to do so we will use the notation introduced in \cref{not:orth}. We get	 
\begin{align*}
	\mathbf{1}_1\mathbf{1}_2\mathbf{1}_3 \odot_2 \id_W &= \sum_{i} ((\mathbf{1}_i\times y_{\sigma(1)})\times x_{\tau(1)})(\mathbf{1}_i\times( y_{\sigma(2)}\times y_{\sigma(3)} ) ) (x_{\tau(2)}\times x_{\tau(3)}) \\
	&\hspace{5ex}-15 \langle \mathbf{1}_1\mathbf{1}_2\mathbf{1}_3 \rangle \id_W \\
	&= 36\sum_i \Big(\orth(\mathbf{1}_i\times \mathbf{1}_j)+\orth(\mathbf{1}_{i}\times \mathbf{1}_k) +\tfrac{1}{2} \sum_{1\leq j\leq 8 } \orth(\mathbf{1}_i\times\upi{(e_j)}{i})\Big)\\
	&\hspace{5ex}-15\langle\mathbf{1}_1,\mathbf{1}_2,\mathbf{1}_3\rangle\id_W \\
	&=9\sum_i \Big(\orth(\mathbf{1}_k)+\orth( \mathbf{1}_j) +\tfrac{1}{2} \sum_{1\leq j\leq 8 } \orth(\upi{(e_j)}{i})\Big)\\
	&\hspace{5ex}-15\langle\mathbf{1}_1,\mathbf{1}_2,\mathbf{1}_3\rangle\id_W. 
\end{align*}
Now note that $	36 \upi{(e_j)}{i}\upi{(e_j)}{i}\mathbf{1}_i +5 \langle \upi{(e_j)}{i},\upi{(e_j)}{i},\mathbf{1}_i\rangle \id_W = \upi{(e_j)}{i}\upi{(e_j)}{i}\mathbf{1}_i \odot_1 \id_W = -12\orth(\upi{(e_j)}{i})-12\orth(\mathbf{1}_i)- \langle \upi{(e_j)}{i},\upi{(e_j)}{i},\mathbf{1}_i\rangle \id_W$ by the previous computation, so we can substitute $\orth(\upi{(e_j)}{i})$ by the equality
	\[\orth(\upi{(e_j)}{i}) = -3 \upi{(e_j)}{i}\upi{(e_j)}{i}\mathbf{1}_i -\orth(\mathbf{1}_i) +\tfrac{1}{6}\id_W.\]
	We get
	\begin{align*}
		\mathbf{1}_1\mathbf{1}_2\mathbf{1}_3 \odot_2 \id_W 
	&=9\sum_i \Big(\orth(\mathbf{1}_k)+\orth( \mathbf{1}_j) -4\orth(\mathbf{1}_i) + \tfrac{2}{3}\id_W -\tfrac{3}{2} \sum_{1\leq j\leq 8 } \upi{(e_j)}{i}\upi{(e_j)}{i}\mathbf{1}_i \Big)\\
	&\hspace{5ex}-15\langle\mathbf{1}_1,\mathbf{1}_2,\mathbf{1}_3\rangle\id_W \\
	&=9\sum_i \Big(\orth(\mathbf{1}_k)+\orth( \mathbf{1}_j) -4\orth(\mathbf{1}_i) + \tfrac{2}{3}\id_W +I_i + 4 \mathbf{1}_i\mathbf{1}_i\Big)\\
	&\hspace{5ex}-\tfrac{15}{6}\id_W \\
	&=9\Big(-2\orth(\mathbf{1}_1)-2\orth( \mathbf{1}_2) -2\orth(\mathbf{1}_3) + 3\id_W +3 \mathbf{1}_1\mathbf{1}_1 +3 \mathbf{1}_2\mathbf{1}_2 +3 \mathbf{1}_3\mathbf{1}_3\Big) -\tfrac{15}{6}\id_W\\
	&=9\Big(-12\mathbf{1}_1\mathbf{1}_2\mathbf{1}_3 -2\langle\mathbf{1}_1\mathbf{1}_2\mathbf{1}_3\rangle\id_W +3\id_W +18\mathbf{1}_1\mathbf{1}_2\mathbf{1}_3\Big) -\tfrac{15}{6}\id_W\\
	&=9(6\mathbf{1}_1\mathbf{1}_2\mathbf{1}_3  + \tfrac{8}{3}\id_W) - \tfrac{15}{6}\id_W\\
	&=54\mathbf{1}_1\mathbf{1}_2\mathbf{1}_3  +129\langle \mathbf{1}_1,\mathbf{1}_2,\mathbf{1}_3 \rangle\id_W.
	\end{align*}
	 This now implies in the same way as before that
	 \begin{align*}
	 	a_1a_2a_3 \odot_2 \id_W = 54 a_1a_2a_3 +129 \langle a_1a_2a_3\rangle \id_W .
	 \end{align*}
	 Combining the expression for $\cdot\odot_1 \id_W$ and $\cdot\odot_1 \id_W$ we get
	 \begin{align*}
	 	a_1a_2a_3 \odot \id_W = \tfrac{7}{4} a_1a_2a_3 -\tfrac{23}{72} \langle a_1a_2a_3\rangle \id_W.
	 \end{align*}
	 Now all that remains is to plug in all the bits of information we have gathered. This results in				 
	\begin{multline}\label{eq:multmaintheoreme7}
		a_1a_2a_3 \star b_1b_2b_3 = \left(-\tfrac{1}{9^2}\langle a_1,a_2,a_3\rangle \langle b_1,b_2,b_3 \rangle +f(a_1a_2a_3,b_1b_2b_3 )\id_W\right.
		\\\left.-\tfrac{12}{9\cdot 432}\langle b_1,b_2,b_3 \rangle \langle a_1,a_2,a_3\rangle
		\right)\id_W
		\\ +\tfrac{1}{9}\langle a_1,a_2,a_3\rangle \left(b_1b_2b_3\right)+\tfrac{1}{9}\langle b_1,b_2,b_3 \rangle \left(a_1a_2a_3\right) 		
		\\+ \left(a_1a_2a_3\right)\odot \left(b_1b_2b_3\right)
		\\-\tfrac{1}{9}\langle b_1,b_2,b_3 \rangle\left(\tfrac{7}{4} a_1a_2a_3 -\tfrac{23}{72} \langle a_1a_2a_3\rangle \id_W \right)
		\\-\tfrac{1}{9}\langle a_1,a_2,a_3\rangle \left(\tfrac{7}{4} b_1b_2b_3 -\tfrac{23}{72} \langle b_1b_2b_3\rangle \id_W \right)		
		\\-\tfrac{1}{72}\langle a_1,a_2,a_3\rangle\langle b_1,b_2,b_3 \rangle\id_W\text{.}
	\end{multline}
	Simplifying this gives the formula in the statement of this theorem. \qedhere
\end{proof} 
\section{The automorphism group}\label{sec:autgroup}
In \cite{chayet2020class}, it was noted that the automorphism group of $A(G)$ with $G$ of type $E_6$ is smooth with identity component $G/Z(G)$. By combining this with \cref{thm:extension,prop:transposeisouterauto}, we can determine what the full automorphism group is. Indeed, to find the stabiliser of a $G$-equivariant multiplication, we only need to check that the multiplication is equivariant under transposition (regarded as an operator on $A(G)\subseteq \End(V(\omega_1))$. This will automatically be the case.
	\begin{proposition}
		The algebra $A(G)$ has automorphism group $G/Z(G)\rtimes C_2$.
	\end{proposition}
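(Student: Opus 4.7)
The plan is to combine the identity-component statement of Chayet--Garibaldi with \cref{thm:extension,prop:transposeisouterauto}: the first two results together bound the group of components by $C_2$, and the third will let us exhibit the transposition $\top$ as the additional outer automorphism.

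First I would invoke \cite{chayet2020class} to note that $\Aut(A(G))$ is smooth with identity component $G/Z(G)$. Any algebra automorphism preserves the unit and hence the decomposition $A(G) = k\cdot\id_W\oplus\sigma(V)$ with $V = V(\omega_1+\omega_6)$, so it acts on $\sigma(V)$ preserving the induced $G$-equivariant multiplication $V\otimes V\to V$. Applying \cref{thm:extension} to this map (with $r=2$, $s=1$, hence $\mu_{r-s}=1$) gives $\Aut(A(G))/(G/Z(G)) \hookrightarrow H$ for some $H \leq \Aut(\Delta) = C_2$.

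To force $H = C_2$ I would show that the transposition $\top$ of \cref{prop:transposeisouterauto} restricts to an outer algebra automorphism of $\sigma(A(G))$. The cleanest argument is intrinsic: the map $\theta \coloneqq -\top$ on $\End(W)$ is a Lie algebra automorphism (since $\top$ is an associative anti-involution, $[X,Y]^\top = -[X^\top,Y^\top]$, which yields $\theta([X,Y]) = [\theta X,\theta Y]$), it preserves the Killing form $K$, and by \cref{prop:derivationsstructure} it preserves $\g\subseteq \End(W)$ while fixing the $F_4$-subalgebra $\spank\{[L_a,L_b]\}$ and negating its $26$-dimensional complement $\spank\{L_a\}$; hence it coincides with the outer involution of $\g$. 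Because the Chayet--Garibaldi formulas for $S$, $\diamond$, and $\sigma$ in \cref{modulestructure,embedding} only involve $\ad$ and $K$, the involution $\theta$ descends to an algebra automorphism of $A(G)$, and comparing \cref{embeddingg2} with its transpose identifies this induced automorphism with $\top|_{\sigma(A(G))}$. This exhibits $\top$ as the required non-inner automorphism.

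The main subtlety is that last check---that $\top$ really preserves both the underlying space $\sigma(A(G))$ and the multiplication $\star$. The intrinsic route via $\theta$ and the functoriality of the Chayet--Garibaldi construction makes this clean; a more pedestrian alternative would be to verify directly from \cref{identificationag2} that $\top$ preserves $f'$, $\odot_1$, and $\odot_2$ individually, using the transformation properties of the cross product from \cref{lem:crossproductproperties,prop:outeraut}, but the bookkeeping would be considerably heavier. The semidirect product structure is inherited from the normaliser $((\mathbb{D}\times G)/Z)\rtimes\Aut(\Delta,\lambda)$ appearing in the proof of \cref{thm:extension}, with $C_2$ acting on $G/Z(G)$ via the outer involution of $E_6$.
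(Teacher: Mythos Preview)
Your proof is correct and follows essentially the same strategy as the paper: both use the smoothness and identity-component result from \cite{chayet2020class} together with \cref{thm:extension} for the upper bound, and then exhibit the extra $C_2$ via the functoriality of the Chayet--Garibaldi construction under Lie algebra automorphisms. The only difference is presentational: the paper invokes the Dynkin diagram symmetry abstractly and checks non-triviality by its action on the weights of $V(\omega_1+\omega_6)$, whereas you realise the outer involution concretely as $\theta=-\top$ on $\g\subset\End(W)$ and trace it through $\sigma$ to identify the induced automorphism with transposition on $\sigma(A(G))$. Your version is more explicit (and in fact supplies the verification that $\top$ preserves $\star$, which the paper only asserts in the paragraph preceding the proposition), but the underlying idea is the same.
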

	\begin{proof}
		The $C_2$ symmetry of the Dynkin diagram of $E_6$ induces an automorphism of the Lie algebra $\mathfrak{e}_6$. This automorphism acts non-trivially on $A(E_6)$, as it acts non-trivially on the set of weights. This means the outer automorphism on the Lie algebra induces a non-trivial automorphism of $A(E_6)$, so by \cref{thm:extension}, this is also the full automorphism group.
	\end{proof}
	
	In fact, we can say more than this.
	\begin{theorem}\label{thm:allmultc2}
	Let $G$ be an adjoint group of type $E_6^{\mathrm{ad}}$.
		Any $G$-equivariant commutative algebra structure on $V(\omega_1+\omega_6)$ has automorphism group equal to $G\rtimes C_2$. 
	\end{theorem}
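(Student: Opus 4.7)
My plan is to show that the transpose $\top$ of \cref{prop:transposeisouterauto}, transported via $\sigma$, is an algebra automorphism of $(V,\odot)$ for every $G$-equivariant commutative product $\odot$. Together with \cref{thm:extension} (using $r-s=1$, so $\mu_{r-s}=1$), this forces $\Aut(V,\odot)=G\rtimes C_2$.

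The key observation is that the pullback $\odot^\top$, defined by $v\odot^\top w:=\top^{-1}(\top(v)\odot\top(w))$, is again a commutative $G$-equivariant product, because $\top$ normalises $G$ via the outer automorphism $g\mapsto g^{-\top}$ from \cref{lem:crossproductproperties}(ii). Hence $\odot\mapsto\odot^\top$ is a linear involution on the two-dimensional space $\spank\{\odot_1,\odot_2\}$ of \cref{prop:multbasis}, and it suffices to show its $(-1)$-eigenspace is zero. My strategy is to exhibit a test pair $(v_0,w_0)$ satisfying (a) $\top(v_0)=v_0$ and $\top(w_0)=w_0$, (b) both $\odot_1(v_0,w_0)$ and $\odot_2(v_0,w_0)$ are $\top$-fixed, and (c) $\odot_1(v_0,w_0)$ and $\odot_2(v_0,w_0)$ are linearly independent in $\End(W)$. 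Any $(-1)$-eigenvector $\odot_-$ would then satisfy $v_0\odot_- w_0 \in \sigma(V)^{\top}\cap \sigma(V)^{-\top}=0$, and injectivity of the evaluation $\odot\mapsto\odot(v_0,w_0)$ on $\spank\{\odot_1,\odot_2\}$ would force $\odot_-=0$.

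Concretely, I would take octonions $a,b\in\Oct$ with $a\cdot a=b\cdot b=0$ and $\langle a,b\rangle\ne 0$, and set $v_0:=\upi{a}{i}\upi{a}{i}\mathbf{1}_i$, $w_0:=\upi{b}{i}\upi{b}{i}\mathbf{1}_i$. Using $\upi{a}{i}\times\upi{a}{i}=0$ and $\upi{a}{i}\times\mathbf{1}_i=-\tfrac{1}{2}\upi{a}{i}$ from \cref{lem:crossproductproperties}(iv), unpacking \cref{def:operators}(iv) shows that $v_0$ acts on $W$ as the rank-one symmetric operator $v\mapsto -\tfrac{1}{3}\langle v,\upi{a}{i}\rangle\upi{a}{i}$, and similarly for $w_0$, establishing (a). The computations in \cref{eq:exodot1,eq:exodot2} express $\odot_1(v_0,w_0)$ and $\odot_2(v_0,w_0)$ as $k$-linear combinations of operators of the types $xy$, $\mathbf{1}_i\mathbf{1}_i$, $I_j$, $I_k$ and $\id_W$, and each of these is directly seen to be symmetric with respect to $\langle\cdot,\cdot\rangle$, establishing (b). Finally, $\odot_1(v_0,w_0)$ contains the summand $\tfrac{2}{3}\langle a,b\rangle\,\upi{a}{i}\upi{b}{i}$, which is absent from $\odot_2(v_0,w_0)$, while $\odot_2(v_0,w_0)$ contains $-\langle a,b\rangle^2(I_j+I_k)$, which is absent from $\odot_1(v_0,w_0)$; these summands are plainly independent in $\End(W)$, giving (c). The main obstacle is to confirm (b): a generic operator $xyz$ from \cref{def:operators}(iv) is \emph{not} symmetric with respect to $\langle\cdot,\cdot\rangle$, so the fact that the two evaluation formulas happen to consist only of symmetric pieces is what makes the argument go through.
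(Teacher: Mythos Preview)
Your argument is essentially the paper's own: both reduce to showing that the involution $\odot\mapsto\odot^\top$ on the two-dimensional space $\spank\{\odot_1,\odot_2\}$ has no $(-1)$-eigenvector, and both verify this by evaluating at the test pair $(\upi{a}{i}\upi{a}{i}\mathbf{1}_i,\upi{b}{i}\upi{b}{i}\mathbf{1}_i)$ (equal to the paper's $(\upi{a}{i}\upi{a}{i}e,\upi{b}{i}\upi{b}{i}e)$ since $\upi{a}{i}\upi{a}{i}\mathbf{1}_j=0$ for $j\neq i$) using the explicit formulas \cref{eq:exodot1,eq:exodot2}; your explicit check of conditions (a)--(c) just unpacks what the paper leaves implicit. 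One point you should make explicit to close the argument: \cref{thm:extension} requires $\Stab(f)$ to be smooth with $G$ normal in it, which the paper secures by citing \cite[Lemma~5.1]{GG15} to get that $\Aut(V,\odot)$ is smooth with identity component $G$.
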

	\begin{proof}
		A commutative $G$-equivariant algebra structure on $V(\omega_1+\omega_6)$ has multiplication of the form $\lambda \odot_1+\mu \odot_2$, by \cref{prop:multbasis}. The automorphism group of such an algebra is smooth with identity component $G/Z(G)$ by \cite[Lemma~5.1]{GG15}. By \cref{thm:extension}, the full automorphism group can then only be extended by the transposition operator $ ^\top$. Since the commutative multiplication space is $2$-dimensional, the map $\Hom_{E_6}(\Sq^2V,V) \to \Hom_{E_6}(\Sq^2 V,V) \colon f\mapsto f^\top$, where $f^\top(v\otimes w) \coloneqq f(v^\top\otimes w^\top)^\top$, is (with respect to a proper basis) one of the following:
		\[ \begin{pmatrix}
			1 & 0\\
			0 & 1
		\end{pmatrix}, \begin{pmatrix}
			-1 & 0\\
			0 & 1
		\end{pmatrix},
		\begin{pmatrix}
			-1 & 0\\
			0 & -1
		\end{pmatrix},
		\begin{pmatrix}
			1 & 1\\
			0 & 1
		\end{pmatrix},
		\begin{pmatrix}
			-1 & 1\\
			0 & -1
		\end{pmatrix}  \] 
		Because it has at most order $2$, it cannot be one of the last two. If it is not equal to the identity, it should have a $(-1)$-eigenvector. We will show this is an impossibility. Suppose $\lambda \odot_1+\mu \odot_2$ is a $(-1)$-eigenvector.
		Note that $\upi{a}{i}\upi{a}{i}e\odot_1^\top \upi{b}{i}\upi{b}{i}e = \upi{a}{i}\upi{a}{i}e\odot_1\upi{b}{i}\upi{b}{i}e$ and $\upi{a}{i}\upi{a}{i}e\odot_2^\top \upi{b}{i}\upi{b}{i}e =\upi{a}{i}\upi{a}{i}e\odot_2\upi{b}{i}\upi{b}{i}e$, so  $\upi{a}{i}\upi{a}{i}e(\lambda \odot_1^\top+\mu \odot_2^\top ) \upi{b}{i}\upi{b}{i}e =0$. But this is impossible, since $\upi{a}{i}\upi{a}{i}e\odot_1\upi{b}{i}\upi{b}{i}e$ and $\upi{a}{i}\upi{a}{i}e\odot_2\upi{b}{i}\upi{b}{i}e$ are linearly independent in $V(\omega_1+\omega_6)$.		
	\end{proof}
	\begin{remark}\label{autnoncomm}
		Interestingly, \cref{thm:allmultc2} is no longer true when we omit the word ``commutative''. In this case, the anticommutative multiplication $\psi_{\id_W}\circ(1\, 2)\circ(\cdot \boxdot\cdot)-  \psi_{\id_W}\circ(1\, 3)\circ(\cdot \boxdot\cdot)$ as defined in \cref{noncommutative}, is a $(-1)$-eigenvector for the outer automorphism. Thus any non-commutative multiplication on $V(\omega_1+\omega_6)$ has as automorphism group precisely $G$.
	\end{remark}
\bibliographystyle{alpha}
\bibliography{sources.bib}

\end{document}